\newtheorem{theorem}{Theorem}
\newtheorem{axiom}{Axiom}
\newtheorem{definition}{Definition}
\newtheorem{corollary}{Corollary}
\newtheorem{remark}{Remark}
\newenvironment{proof}[1][Proof]{\noindent\textbf{#1.} }{\ \rule{0.5em}{0.5em}}
\begin{document}

\title{Fluctuation geometry: A counterpart approach of inference geometry}

\author{L. Velazquez}
\address{Departamento de F\'\i sica, Universidad Cat\'olica del Norte, Av. Angamos 0610, Antofagasta,
Chile.}

\begin{abstract}
Starting from an axiomatic perspective, \emph{fluctuation geometry} is developed as a counterpart approach of inference geometry. This approach is inspired on the existence of a notable analogy between the general theorems of \emph{inference theory} and the the \emph{general fluctuation theorems} associated with a parametric family of distribution functions $dp(I|\theta)=\rho(I|\theta)dI$, which describes the behavior of a set of \emph{continuous stochastic variables} driven by a set of control parameters $\theta$. In this approach, statistical properties are rephrased as purely geometric notions derived from the \emph{Riemannian structure} on the manifold $\mathcal{M}_{\theta}$ of stochastic variables $I$. Consequently, this theory arises as an alternative framework for applying the powerful methods of differential geometry for the statistical analysis. Fluctuation geometry has direct implications on statistics and physics. This geometric approach inspires a Riemannian reformulation of Einstein fluctuation theory as well as a geometric redefinition of the information entropy for a continuous distribution.
\newline
\newline
PACS numbers: 02.50.-r; 02.40.Ky; 05.45.-a; 02.50.Tt \newline
Keywords: Geometrical methods in statistics; Fluctuation theory; Information theory
\end{abstract}

\section{Introduction}

Inference theory supports the introduction of a Riemannian distance notion \cite{Amari}:
\begin{equation}\label{inf.dist}
ds^{2}_{F}=g_{\alpha\beta}(\theta)d\theta^{\alpha}d\theta^{\beta}
\end{equation}
to characterize the \textit{statistical distance} between two close members of a generic parametric family of distribution functions:
\begin{equation}\label{DF}
dp(I|\theta)=\rho(I|\theta)dI.
\end{equation}
Here, the metric tensor $g_{\alpha\beta}(\theta)$ is provided by the so-called \emph{Fisher's information matrix} \cite{Fisher}. The existence of this type of Riemannian formulation was pioneering suggested by Rao \cite{Rao}, which is hereafter referred to as \textit{inference geometry}\footnote{This approach is referred to as \emph{Riemannian geometry on statistical manifolds} in the literature.  However, the denomination \emph{inference geometry} is employed here to avoid the ambiguity with \emph{fluctuation geometry}, which is also a Riemannian geometry on statistical manifold.}. Inference geometry provides very strong tools for proving results about statistical models, simply by considering them as well-defined geometrical objects. As expected, inference theory and its geometry have a direct application in those physical theories with a statistical apparatus as statistical mechanics and quantum theories. A curious example is the so-called \emph{extreme physical information principle}, proposed by Frieden in 1998, which claims that all physical laws can be derived from purely inference arguments \cite{Frieden}.
Inference geometry has been adapted to the mathematical apparatus of quantum mechanics \cite{Brody1,Barndorff-Nielsen,Gibilisco,Gibilisco2}. In fact, modern interpretations of uncertainty relations are inspired on their arguments \cite{Holevo,Caianiello,Braunstein}. Inference geometry has been successfully employed in statistical mechanics to study phase transitions \cite{Janke,Brody}, as well as in the framework of so-called thermodynamics geometry \cite{Crooks}.

The goal of this paper is to show that the parametric family of distribution functions (\ref{DF}) supports the introduction of an alternative Riemannian distance notion:
\begin{equation}\label{fluct.dist}
ds^{2}=g_{ij}(I|\theta)dI^{i}dI^{j},
\end{equation}
which characterizes the statistical distance between two close sets of continuum stochastic variables $I$ and $I+dI$ for fixed values of control parameters $\theta$. This geometrical approach, hereafter referred to as \textit{fluctuation geometry}, is inspired on the existence of a notable analogy between the general theorems of inference theory and some general fluctuation theorems recently proposed in the framework of equilibrium classical fluctuation theory \cite{Vel.ETFR,Vel.URSM,Vel.GEFT,Vel.GEO}. The main consequence derived from this analysis is the possibility to rephrase the original parametric family of distribution functions (\ref{DF}) in terms of purely geometry notions. This connection enables the direct application of powerful tools of differential geometry for the analysis of their \emph{absolute statistical properties}\footnote{Tensorial formalism of Riemannian geometry allows to study the \emph{absolute geometric properties} of a manifold $\mathcal{M}$ using any of its coordinate representations $\mathcal{R}_{I}$. A relevant example of absolute property is the \emph{curvature} of the manifold $\mathcal{M}$, which is manifested in any coordinate representation $\mathcal{R}_{I}$. In general relativity theory, the curvature of space-time $\mathbb{M}^{4}$ is identified with gravitation interaction. The effects of gravitation are \emph{absolute} or \emph{irreducible}, while the effects associated with the \emph{inertial forces} are \emph{reducible}. In fact, the existence of inertial forces crucially depends on the reference frame, that is, the specific coordinate representation of the space-time $\mathbb{M}^{4}$.}.

The paper is organized as follows. For the sake of self-consistence, the next section will be devoted to discuss the main motivation of the present proposal: the analogy between inference theory and fluctuation theory. Afterwards, it will be developed an axiomatic formulation of fluctuation geometry. Firstly, the postulates of fluctuation geometry are presented in section \ref{Postulates}. Then, their consequences are considered in section \ref{Consequences} to perform a geometric reinterpretation of the statistical description. Section \ref{Examples} is devoted to discuss two simple application examples of fluctuation geometry. Implications of the present approach in some statistics and physics problems will be analyzed in section \ref{Relevance}, as example, a reconsideration of information entropy for a continuous distribution and the development of a Riemannian reformulation of Einstein fluctuation theory. Final remarks and open problems will be summarized in section \ref{Final}.

\section{Motivation}
Let us start from the parametric family of distribution functions (\ref{DF}), which describes the behavior of a set of continuous stochastic variables $I$ driven by a set $\theta$ of control parameters. Let us denote by $\mathcal{M}_{\theta}$ the statistical manifold constituted by all admissible values of the stochastic variables $I$ that are accessible for a given value $\theta$ of control parameters, which is hereafter assumed as a simply connected domain. Moreover, let us denote by $\mathcal{P}$ the statistical manifold constituted by all admissible values of control parameters $\theta$ (each point $\theta\in\mathcal{P}$ represents a given distribution function). The parametric family of distribution functions (\ref{DF}) can be analyzed from two different perspectives:
\begin{itemize}
\item To study the \emph{fluctuating behavior} of stochastic variables $I\in\mathcal{M}_{\theta}$, which is the main interest of \emph{\textbf{%
fluctuation theory}} \cite{Vel.GEFT};

\item To study the relationship between this fluctuation behavior and the \emph{external control} described by the parameters $\theta\in\mathcal{P}$, which is the interest of \emph{\textbf{inference theory}} \cite{Fisher}.
\end{itemize}

\subsection{Fluctuation theory}
Let us admit that the probability density $\rho(I|\theta)$ is everywhere finite and differentiable, and obeys the following conditions for every point $I_{b}$ located on the boundary $\partial\mathcal{M}_{\theta}$ of the statistical manifold $\mathcal{M}_{\theta}$, $I_{b}\in\partial\mathcal{M}_{\theta}$:
\begin{equation}\label{boundary}
\lim_{I\rightarrow I_{b}}\rho(I|\theta)=\lim_{I\rightarrow
I_{b}}\frac{\partial}{\partial I^{i}}\rho(I|\theta)=0.
\end{equation}
The probability density $\rho(I|\theta)$ can be considered to introduce the \textit{differential forces} $\eta_{i}(I|\theta)$:
\begin{equation}\label{DGF}
\eta_{i}(I|\theta)=-\frac{\partial}{\partial
I^{i}}\log\rho(I|\theta).
\end{equation}
By definition, the differential forces $\eta_{i}(I|\theta)$ vanish in those points where the probability density $\rho(I|\theta)$ exhibits its local maxima or its local minima. The global (local) maximum of the probability density can be regarded as a \textit{stable (metastable) equilibrium points} $\bar{I}$, which can be obtained from the following \emph{stationary and stability conditions}:
\begin{equation}\label{stat.cond}
-\frac{\partial}{\partial
I^{i}}\log\rho(\bar{I}|\theta)=0,\:-\frac{\partial^{2}}{\partial
I^{i}\partial
I^{j}}\log\rho(\bar{I}|\theta)\succ0,
\end{equation}
where $A_{ij}\succ 0$ denotes that the matrix $A_{ij}$ is positive definite. In general, the differential forces $\eta_{i}(I|\theta)$ characterize the deviation of a given point $I\in\mathcal{M}_{\theta}$ from these local equilibrium points. Analogously, it is convenient to introduce the \emph{response matrix} $\chi_{ij}(I|\theta)$:
\begin{equation}\label{response}
\chi_{ij}(I|\theta)=\partial_{j}\eta_{i}(I|\theta),
\end{equation}
where $\partial_{i}A=\partial A/\partial I^{i}$, which describes the response of differential forces $\eta_{i}(I|\theta)$ under an infinitesimal change of the variable $I^{j}$.

As stochastic variables, the expectation values of the differential forces $\eta_{i}=\eta_{i}(I|\theta)$ identically vanish:
\begin{equation}\label{equi.cond}
\left\langle \eta_{i}\right\rangle =0,
\end{equation}
and these quantities also obey the \textit{fundamental and the associated fluctuation theorems} \cite{Vel.GEFT}:
\begin{eqnarray}\label{fund.fluct}
\left\langle \eta_{i}\delta I^{j}\right\rangle
=\delta_{i}^{j},\\
\left\langle
\chi_{ij}\right\rangle=\left\langle\eta_{i}\eta_{j}\right\rangle,
\label{assoc.fluct}
\end{eqnarray}
where $\delta^{j}_{i}$ is the Kronecker delta. The previous theorems are derived from the following identity:
\begin{equation}\label{identity}
\left\langle\partial_{i}
A(I|\theta)\right\rangle=\left\langle\eta_{i}(I|\theta)A(I|\theta)\right\rangle
\end{equation}
substituting the cases $A(I|\theta)=1$, $I^{i}$ and $\eta_{i}$, respectively. Here, $A(I)$ is a differentiable function defined on the continuous variables $I$ with definite expectation values $\left\langle\partial A(I|\theta)/\partial I^{i}\right\rangle$ that obeys the following the boundary condition:
\begin{equation}
\lim_{I\rightarrow I_{b}}A(I)\rho(I|\theta)=0.
\end{equation}
Moreover, equation (\ref{identity}) follows from the integral expression:
\begin{equation}
\int_{\mathcal{M}_{\theta}}\frac{\partial\upsilon^{j}(I|\theta)}{\partial
I^{j}}\rho(I|\theta)dI  =\oint_{\partial\mathcal{M}_{\theta}}\rho(I|\theta)%
\upsilon^{j}(I|\theta)\cdot
d\Sigma_{j}  -\int_{\mathcal{M}_{\theta}}\upsilon^{j}(I|\theta)\frac{\partial\rho(I|
\theta)}{\partial I^{j}}dI
\end{equation}
derived from the \textit{intrinsic exterior calculus} of the statistical manifold $\mathcal{M}_{\theta}$ and the imposition of the constraint $\upsilon^{j}(I|\theta)\equiv\delta^{j}_{i}A(I|\theta)$. It is easy to realize that the identity (\ref{equi.cond}) and the associated fluctuation theorem (\ref{assoc.fluct}) are just the \emph{stationary and stability equilibrium conditions} (\ref{stat.cond}) \emph{written in term of statistical expectation values}, respectively. In particular, the positive definite character of the self-correlation matrix $M_{ij}(\theta)=\left\langle\eta_{i}(I|\theta)\eta_{j}(I|\theta)\right\rangle$ implies the positive definition of the matrix:
\begin{equation}\label{response}
\left\langle\partial_{i}\eta_{j}(I|\theta)\right\rangle=\left\langle-\frac{\partial^{2}\log\rho(I|\theta)}{\partial
I^{i}\partial I^{j}}\right\rangle\succ0,
\end{equation}
Remarkably, the fundamental fluctuation theorem (\ref{fund.fluct}) suggests the \emph{statistical complementarity} of the variable $I^{i}$ and its conjugated differential force $\eta_{i}=\eta_{i}(I|\theta)$. Using the Schwartz inequality $\left\langle \delta A \delta B\right\rangle^{2}\leq \left\langle \delta A^{2}\right\rangle \left\langle\delta B^{2}\right\rangle$, one obtains the following inequality:
\begin{equation}\label{unc.fluct}
\Delta I^{i} \Delta \eta_{i}\geq 1,
\end{equation}
where $\Delta x =\sqrt{\left\langle \delta x^{2}\right\rangle}$ is the \emph{statistical uncertainty} of the quantity $x$. Clearly, this last inequality exhibits the same mathematical appearance of \emph{Heisenberg's uncertainty relation} $\Delta q \Delta p \geq \hbar$. Recently, this result was employed to show the existence of uncertainty relations involving conjugated thermodynamic quantities \cite{Vel.ETFR,Vel.URSM}.  Equation (\ref{unc.fluct}) can be generalized considering the inverse $M^{ij}(\theta)$ of the self-correlation matrix of the differential forces $M_{ij}(\theta)=\left\langle\eta_{i}(I|\theta)\eta_{j}(I|\theta)\right\rangle$.  Denoting by $C^{ij}(\theta)=\left\langle\delta I^{i}\delta
I^{j}\right\rangle$ the self-correlation matrix of the stochastic variables $I$, it is possible to obtain the following matrical inequalities:
\begin{equation}
C^{ij}(\theta)-M^{ij}(\theta)\succeq 0.
\end{equation}
This last inequality is directly obtained from the positive definition of the self-correlation matrix $K^{ij}(\theta)=\left\langle J^{i}(I|\theta)J^{j}(I|\theta)\right\rangle$ of the auxiliary quantities $J^{i}(I|\theta)=\delta I^{i}+M^{ij}(\theta)\eta_{i}(I|\theta)$. Accordingly, the self-correlation matrix $C^{ij}(\theta)$ of stochastic variables $I$ is inferior bound by the inverse $M^{ij}(\theta)$ of the self-correlation matrix of the differential forces $\eta_{i}$.

\subsection{Inference theory}
Inference theory can be described as the problem of deciding how well a set of outcomes $\mathcal{I}=\left\{I^{(1)},I^{(2)},...I^{(m)}\right\} $ obtained from independent measurements fits a proposed distribution function $dp(I|\theta)$ \cite{Fisher}. This question is fully equivalent to infer the values of control parameters $\theta$ from this last experimental information. To make inferences about control parameters, one employs \textit{estimators} $\hat{\theta}^{\alpha}=\theta^{\alpha}(\mathcal{I})$, that is, functions on the outcomes $\mathcal{I}\in\mathcal{M}^{m}_{\theta}$,
where $\mathcal{M}^{m}_{\theta}=\mathcal{M}_{\theta}\otimes \mathcal{M}_{\theta}\ldots\otimes\mathcal{M}_{\theta}$ (m-times the external product of the statistical manifold $\mathcal{M}_{\theta}$). The values of these functions pretend to be the best guess for $\theta^{\alpha}$.

Let us admit that the probability density $\rho(I|\theta)$ is everywhere differentiable and finite on the statistical manifold $\mathcal{P}$ of control parameters $\theta$. Let us start introducing the statistical expectation values $\left\langle A\left( \mathcal{I}|\theta \right)\right\rangle$ as follows:
\begin{equation}  \label{inf.exp.value}
\left\langle A\left( \mathcal{I}|\theta \right)\right\rangle=\int_{\mathcal{M%
}^{m}_{\theta}}A\left( \mathcal{I}|\theta \right)\varrho \left( \mathcal{I}|
\theta \right)d\mathcal{I},
\end{equation}
where $d\mathcal{I}=dI^{(1)}dI^{(2)}...dI^{(m)}$ and $\varrho \left(\mathcal{I}| \theta \right)$ is the so-called \textit{likelihood function}:
\begin{equation}  \label{likelihood}
\varrho \left( \mathcal{I}| \theta
\right)=\prod^{m}_{i=1}\rho(I^{(i)}|\theta).
\end{equation}
Taking the partial derivative $\partial_{\alpha}=\partial /\partial \theta^{\alpha }$ of Eq.(\ref{inf.exp.value}), one obtains the following mathematical identity:
\begin{equation}  \label{Ident.type2}
\left\langle \partial_{\alpha} A\left( \mathcal{I}| \theta
\right)  \right\rangle -\partial_{\alpha}
\left\langle A\left( \left. \mathcal{I}\right\vert \theta \right)
\right\rangle =\left\langle A\left( \left.
\mathcal{I}\right\vert \theta \right) \upsilon_{\alpha }\left( \left.
\mathcal{I}\right\vert \theta \right) \right\rangle,
\end{equation}
where:
\begin{equation}\label{score.vector}
\hat{\upsilon} _{\alpha }=\upsilon _{\alpha }\left( \left. \mathcal{I}%
\right\vert \theta \right) =-\frac{\partial }{\partial \theta^{\alpha }}\log
\rho \left( \mathcal{I}\left\vert \theta \right. \right)
\end{equation}
are the components of the \textit{score vector} $\hat{\upsilon} =\left\{\hat{\upsilon} _{\alpha }\right\} $. Substituting $A\left( \left. \mathcal{I}\right\vert \theta \right) =1$ into Eq.(\ref{Ident.type2}), one arrives at the vanishing of the expectation values of the score vector components:
\begin{equation}
\left\langle \upsilon _{\alpha }\left( \left. \mathcal{I}\right\vert \theta
\right) \right\rangle =0.  \label{equi.score}
\end{equation}%
Let us consider now any \textit{unbiased estimator} $A(\mathcal{I}|\theta)=\theta^{\alpha }(\mathcal{I})$ of the parameter $\theta^{\alpha}$, $\left\langle\theta^{\alpha }(\mathcal{I})\right\rangle=\theta^{\alpha}$, as well as the score vector component $A(\mathcal{I}| \theta)=\upsilon_{\beta }(\mathcal{I}|\theta)$. Substituting these quantities into identity (\ref{Ident.type2}), it is possible to obtain the following results:
\begin{equation}
\left\langle \delta \theta^{\alpha }\left( \mathcal{I}\right) \upsilon
_{\beta }\left(\mathcal{I}\left\vert \theta \right. \right) \right\rangle
=-\delta _{\beta }^{\alpha },  \label{fund.infe.theo}
\end{equation}
\begin{equation}
\left\langle \partial_{\beta} \upsilon _{\alpha }\left(  \mathcal{I}
| \theta \right)\right\rangle
=\left\langle \upsilon_{\alpha }\left( \left. \mathcal{I}\right\vert
\theta \right)  \upsilon _{\beta }\left( \left. \mathcal{I}\right\vert
\theta \right) \right\rangle .  \label{comple.infe.theo}
\end{equation}
It is easy to realize that the identities (\ref{equi.score}) and (\ref{comple.infe.theo}) can be regarded as the stationary and stability conditions of the known method of \emph{maximum likelihood estimators} \cite{Fisher}. According to this method, the best values of the parameters $\theta$ should maximize the logarithm of the \textit{likelihood function} $\varrho \left( \mathcal{I}| \theta
\right)$ for a given set of outcomes $\mathcal{I}$. Such an exigence leads to the following \textit{stationary and stability conditions}:
\begin{equation}
-\frac{\partial }{\partial \theta ^{\alpha }}\log\varrho \left( \mathcal{I}|
\bar{\theta} \right) =0,-\frac{\partial^{2} }{\partial \theta ^{\alpha }\partial
\theta ^{\beta }}\log\varrho \left( \mathcal{I}| \bar{\theta} \right)\succ 0,
\end{equation}
which should be solved to obtain the \emph{maximum likelihood estimators} $\hat{\theta}^{\alpha}_{mle}=\bar{\theta}^{\alpha}(\mathcal{I})$. On the other hand, the identity (\ref{fund.infe.theo}) also suggests the statistical complementarity between the estimator $\hat{\theta}^{\alpha}=\theta^{\alpha}(\mathcal{I})$ and its conjugated score vector component $\upsilon_{\alpha}$. Using the Schwartz inequality, one obtains the following uncertainty-like inequality:
\begin{equation}
\Delta \hat{\theta}^{\alpha}\Delta\upsilon_{\alpha}\geq 1.
\end{equation}
This result can be easily improved introducing the inverse matrix $g^{\alpha\beta}(\theta)$ of the self-correlation matrix $g_{\alpha\beta}(\theta)=\left\langle\upsilon_{\alpha}(\mathcal{I}|\theta)\upsilon_{\beta}(\mathcal{I}|\theta)
\right\rangle$ and the auxiliary quantity $X^{\alpha}=\delta\hat{\theta}^{\alpha}-g^{\alpha\beta}\upsilon_{\beta}$. Thus, one can compose the positive definite form:
\begin{equation}
\left\langle\left(\lambda_{\alpha}X^{\alpha}\right)^{2}\right\rangle=\left\langle
X^{\alpha}X^{\beta}\right\rangle\lambda_{\alpha}\lambda_{\beta}\geq
0,
\end{equation}
which leads to the positive definition of the matrix:
\begin{equation}\label{CramerRao}
\left\langle\delta
\hat{\theta}^{\alpha}\delta\hat{\theta}^{\beta}\right\rangle-g^{\alpha\beta}(\theta)\succeq
0.
\end{equation}
This last inequality is the famous \textit{Cramer-Rao theorem} of inference theory \cite{Fisher,Rao} that imposes an inferior bound to the efficiency of unbiased estimators $\hat{\theta}^{\alpha}$, where the self-correlation matrix $g_{\alpha\beta}(\theta)$:
\begin{equation}\label{Fisher}
g_{\alpha\beta}(\theta)=\left\langle\upsilon_{\alpha}(\mathcal{I}|\theta)\upsilon_{\beta}(\mathcal{I}|\theta)
\right\rangle
\end{equation}
is the Fisher's information matrix referred to in the introductory section.

\subsection{Analogy between inference theory and fluctuation theory}
Fluctuation theory and inference theory provide two different but \emph{complementary} characterizations for a given parametric family of distribution functions $dp(I|\theta)$. Formally, these two statistical frameworks can be regarded as \emph{dual counterpart approaches} because of the great analogy among their main definitions and theorems, as clearly evidenced in table \ref{Duality}. To simplify the notation, it was introduced here the gradient operators $\partial_{i}\rightarrow\mathbf{\nabla}_{I}$ and $\partial_{\alpha}\rightarrow\mathbf{\nabla}_{\theta}$, the diadic products $\mathbf{A}\cdot \mathbf{B}=A_{i}B_{j}\mathbf{e}^{i}\cdot \mathbf{e}^{j}$ and $\mathbf{\xi}\cdot \mathbf{\psi}=\xi_{\alpha}\psi_{\beta}\mathbf{\epsilon}^{\alpha}\cdot \mathbf{\epsilon}^{\beta}$ and the Kroneker delta
$\delta^{i}_{j}\rightarrow\mathbf{1}_{I}$ and $\delta^{\alpha}_{\beta}\rightarrow\mathbf{1}_{\theta}$.

\begin{table}[tbp] \centering
\begin{tabular}{|c|c|}
\hline\hline \textbf{Inference theory} & \textbf{Fluctuation theory}
\\ \hline\hline
\multicolumn{1}{|c|}{$\mathbf{\upsilon }\left( \mathcal{I}|\theta \right) =-%
\mathbf{\nabla }_{\theta }\log \rho \left( \mathcal{I}|\theta \right) $} & $%
\mathbf{\eta }\left( I|\theta \right) =-\mathbf{\nabla }_{I}\log
\rho \left(
I|\theta \right) $ \\
$\left\langle \mathbf{\upsilon }\left( \mathcal{I}|\theta \right)
\right\rangle =0$ & $\left\langle \mathbf{\eta }\left( I|\theta
\right)
\right\rangle =0$ \\
$\left\langle \mathbf{\upsilon }\left( \mathcal{I}|\theta \right)
\cdot \delta \mathbf{\hat{\theta}}\right\rangle
=-\mathbf{1}_{\theta}$ & $\left\langle
\mathbf{\eta }\left( I|\theta \right) \cdot \delta \mathbf{I}\right\rangle =%
\mathbf{1}_{I}$ \\
$\left\langle \mathbf{\nabla }_{\theta }\cdot \mathbf{\upsilon
}\left(
\mathcal{I}|\theta \right) \right\rangle =\left\langle \mathbf{\upsilon }%
\left( \mathcal{I}|\theta \right) \cdot \mathbf{\upsilon }\left( \mathcal{I}%
|\theta \right) \right\rangle $ & $\left\langle \mathbf{\nabla
}_{I}\cdot
\mathbf{\eta }\left( I|\theta \right) \right\rangle =\left\langle \mathbf{%
\eta }\left( I|\theta \right) \cdot \mathbf{\eta }\left( I|\theta
\right) \right\rangle $ \\ \hline
\end{tabular}
\caption{Analogy between inference theory and fluctuation
theory.}\label{Duality}
\end{table}

Remarkably, the analogy between fluctuation theory and inference theory is uncomplete in regard to their respective \emph{geometric features}. The parametric family $dp(I|\theta)$ is expressed in the representations $\mathcal{R}_{I}$ and $\mathcal{R}_{\theta}$ of the statistical manifolds $\mathcal{M}_{\theta}$ and $\mathcal{P}$, respectively. Equivalently, the same parametric family can be also rewritten using  the representations $\mathcal{R}_{\Theta}$ and $\mathcal{R}_{\nu}$ of the manifolds $\mathcal{M}_{\theta}$ and $\mathcal{P}$, which implies the consideration of the coordinate changes $\Theta(I):\mathcal{R}_{I}\rightarrow\mathcal{R}_{\Theta}$ and $\nu(\theta):\mathcal{R}_{\theta}\rightarrow\mathcal{R}_{\nu}$. Under these parametric changes, the Fisher's inference matrix (\ref{Fisher}) behaves as the components of a second rank covariant tensor:
\begin{equation}
g_{\gamma\delta}(\nu)=\frac{\partial \theta^{\alpha}}{\partial\nu^{\gamma}}\frac{\partial \theta^{\beta}}{\partial\nu^{\delta}}g_{\alpha\beta}(\theta).
\end{equation}
The existence of these last transformation rules guarantees the invariance of the inference distance notion (\ref{inf.dist}). Thus, the statistical manifold $\mathcal{P}$ of control parameters $\theta$ can be endowed of a \emph{Riemannian structure}. The relevance of the distance notion (\ref{inf.dist}) can be understood considering the asymptotic expression of the distribution function of the \emph{efficient unbiased estimators} $\hat{\theta}_{eff}(\mathcal{I})$:
\begin{equation}\label{asymp.dist}
dQ^{m}(\vartheta|\theta)=\int_{\mathcal{M}^{m}_{\theta}}\delta\left[\vartheta-\hat{\theta}_{eff}
(\mathcal{I})\right]\varrho(\mathcal{I}|\theta)d\mathcal{I}
\end{equation}
when the number of outcomes $m$ is sufficiently large. Since $g_{\alpha\beta}(\theta)\propto m$, one can obtain the following approximation formula:
\begin{equation}\label{asympt.inf}
dQ^{m}(\vartheta|\theta)\simeq\exp\left[-\frac{1}{2}
g_{\alpha\beta}(\theta)\Delta\vartheta^{\alpha}\Delta\vartheta^{\beta}\right]\sqrt{\left|\frac{
g_{\alpha\beta}(\theta)}{2\pi}\right|}d\vartheta,
\end{equation}
where $\Delta\vartheta^{\alpha}=\vartheta^{\alpha}-\theta^{\alpha}$. Accordingly, the distance notion (\ref{inf.dist}) provides
the \textit{distinguishing probability} between two close distribution functions of the parametric family $dp(I|\theta)$ through the inferential procedure.

The analogy between fluctuation theory and inference theory strongly suggests the existence of \emph{a counterpart approach of inference geometry in the framework of fluctuation theory}, that is, the existence of a Riemannian distance notion (\ref{fluct.dist}) to characterize the statistical separation between close points $I$ and $I+dI\in\mathcal{M}_{\theta}$. Unfortunately, the underlying analogy is insufficient to introduce the particular expression of the metric tensor $g_{ij}(I|\theta)$. For example, it is easy to check that the fluctuation theorems (\ref{equi.cond})-(\ref{assoc.fluct}) can be also expressed in the new coordinate representation $\mathcal{R}_{\Theta}$ \cite{Vel.GEO}, as example, the associated fluctuation theorem:
\begin{equation}
\left\langle\partial_{i}\eta_{j}(\Theta|\theta)\right\rangle=\left\langle\eta_{i}(\Theta|\theta)\eta_{j}(\Theta|\theta)\right\rangle,
\end{equation}
where $\partial_{i}=\partial/\partial\Theta^{i}$ and $\eta_{i}(\Theta|\theta)$:
\begin{equation}
\eta_{i}(\Theta|\theta)=-\frac{\partial}{\partial\Theta^{i}}\log\rho(\Theta|\theta).
\end{equation}
However, the self-correlation matrix $\tilde{M}_{ij}(\theta)=\left\langle\eta_{i}(\Theta|\theta)\eta_{j}(\Theta|\theta)\right\rangle$ associated with the new coordinate representation $\mathcal{R}_{\Theta}$ is not related by \emph{local transformation rules} to its counterpart expression $M_{ij}(\theta)=\left\langle\eta_{i}(I|\theta)\eta_{j}(I|\theta)\right\rangle$ in the old coordinate representation $\mathcal{R}_{I}$. The self-correlation matrix of the differential forces $M_{ij}(\theta)=\left\langle\eta_{i}(I|\theta)\eta_{j}(I|\theta)\right\rangle$ is a matrix function defined on the statistical manifold $\mathcal{P}$ of control parameters $\theta$, while the metric tensor $g_{ij}(I|\theta)$ is a \emph{tensorial entity} defined on the statistical manifolds $\mathcal{M}_{\theta}$ and $\mathcal{P}$. As expected, the definition of the metric tensor $g_{ij}(I|\theta)$ cannot involve integral expressions over the manifold $\mathcal{M}_{\theta}$ as the case of inference metric tensor (\ref{Fisher}).

\section{Fluctuation geometry}\label{Postulates}
Fluctuation geometry can be formulated starting from a set of axioms that combine the statistical nature of the manifold $\mathcal{M}_{\theta}$ and the notions of differential geometry. These axioms specify the way to introduce the metric tensor $g_{ij}(I|\theta)$ associated with the parametric family (\ref{DF}). This section is devoted to discuss these axioms and their most direct consequences.

\subsection{Postulates}

\begin{axiom}
The manifold of the stochastic variables $\mathcal{M}_{\theta}$ possesses a \textbf{Riemannian structure}, that is, it is provided
of a \textbf{metric tensor} $g_{ij}\left( I|\theta \right)$ and a \textbf{torsionless  covariant differentiation} $D_{i}$ that obeys
the following constraints:
\begin{equation}
D_{k}g_{ij}\left( I|\theta \right) =0.  \label{Dg}
\end{equation}
\end{axiom}
\begin{definition}
The Riemannian structure on the statistical manifold $\mathcal{M}_{\theta}$ allows to introduce the \textbf{invariant
volume element} as follows:
\begin{equation}  \label{inv.volume}
d\mu(I|\theta)=\sqrt{\left\vert \frac{g_{ij}\left( I|\theta \right) }{2\pi }%
\right\vert }dI,
\end{equation}
where $\left\vert g_{ij}\left( I|\theta \right)\right\vert $ denotes the absolute value of the metric tensor determinant.
\end{definition}
\begin{axiom}
There exist a differentiable scalar function $\mathcal{S}\left(I|\theta \right) $ defined on the statistical manifold $\mathcal{M}_{\theta }$, hereafter referred to as the \textbf{information potential}, whose knowledge determines the distribution function $dp\left(I|\theta \right)$ of the stochastic variables $I\in\mathcal{M}_{\theta}$ as follows:
\begin{equation}
dp\left( I|\theta \right) =\exp \left[ \mathcal{S}\left( I|\theta
\right) \right] d\mu(I|\theta).  \label{EinsteinPostulate}
\end{equation}
\end{axiom}
\begin{definition}
Let us consider an arbitrary curve given in parametric form $I(t)\in\mathcal{M}_{\theta}$ with fixed extreme points $I(t_{1})=P$ and $I(t_{2})=Q$. Adopting the following notation:
\begin{equation}
\dot{I}^{i}=\frac{d I^{i}(t)}{dt},
\end{equation}
the \textbf{length} $\Delta s $ of this curve can be expressed as:
\begin{equation}\label{distance}
\Delta s=\int_{t_{1}}^{t_{2}}\sqrt{g_{ij}\left[ I(t)|\theta \right] \dot{I}%
^{i}\left( t\right) \dot{I}^{j}\left( t\right) }dt.
\end{equation}
\end{definition}
\begin{definition}
It is say that the curve $I(t)\in\mathcal{M}_{\theta}$ exhibits a \textbf{unitary affine parametrization} when its parameter $t$ satisfies the following constraint:
\begin{equation}\label{affine}
g_{ij}[I(t)|\theta]\dot{I}^{i}(t)\dot{I}^{j}(t)=1.
\end{equation}
\end{definition}
\begin{definition}
A \textbf{geodesic} is the curve $I_{g}\left( t\right) $ with minimal length (\ref{distance}) between two fixed arbitrary points $(P,Q)\in\mathcal{M}_{\theta}$. Moreover, the \textbf{distance} $\mathfrak{D}_{\theta}(P,Q)$ between these two points $(P,Q)$ is given by the length of its associated geodesic $I_{g}(t)$:
\begin{equation}\label{distance}
\mathfrak{D}_{\theta}(P,Q)=\int_{t_{1}}^{t_{2}}\sqrt{g_{ij}\left[ I_{g}(t)|\theta \right] \dot{I}_{g}%
^{i}\left( t\right) \dot{I}_{g}^{j}\left( t\right) }dt.
\end{equation}
\end{definition}
\begin{definition}
Let us consider a differentiable curve $I(t)\in\mathcal{M}_{\theta}$ with an unitary affine parametrization. The \textbf{information dissipation} $\Phi(t)$ along the curve $I(t)$ is defined as follows:
\begin{equation}\label{rate}
\Phi(t)=\frac{d\mathcal{S}\left[I(t)|\theta\right]}{dt}.
\end{equation}
\end{definition}
\begin{axiom}
The length $\Delta s$ of any interval $(t_{1},t_{2})$ of an arbitrary \textit{geodesic} $I_{g}(t)\in\mathcal{M}_{\theta}$ with a unitary affine parametrization is given by the negative of the variation of its information dissipation $\Delta\Phi(t)$:
\begin{equation}\label{metric}
\Delta s=-\Delta\Phi(t)=\Phi(t_{1})-\Phi(t_{2}).
\end{equation}
\end{axiom}
\begin{axiom}
If $\mathcal{M}_{\theta}$ is not a closed manifold, $\partial\mathcal{M}_{\theta}\neq\emptyset$, the probability density $\rho(I|\theta)$ associated with distribution function (\ref{EinsteinPostulate}) vanishes with its
first partial derivatives for any point on the boundary $\partial\mathcal{M}_{\theta}$ of the statistical manifold $\mathcal{M}_{\theta}$.
\end{axiom}

\subsection{Analysis of axioms and their direct consequences}
\textbf{Axiom 1} postulates the existence of the metric tensor $g_{ij}(I|\theta)$ defined on the statistical manifold $\mathcal{M}_{\theta}$. Even, this axiom specifies the \emph{Riemannian structure} of the manifold $\mathcal{M}_{\theta}$ starting from the knowledge of the metric tensor $ g_{ij}(I|\theta)$, e.g.: the \emph{covariant differentiation} $D_{i}$ and the \emph{curvature tensor} $R_{ijkl}(I|\theta)$. Equation (\ref{Dg}) is an strong constraint of Riemannian geometry that determines a natural \textit{affine connections} $\Gamma _{ij}^{k}$ for the covariant differentiation $D_{i}$, specifically, the so-called \textit{Levi-Civita connection} \cite{Berger}:
\begin{equation}
\Gamma _{ij}^{k}\left( I|\theta \right) =g^{km}\frac{1}{2}\left( \frac{%
\partial g_{im}}{\partial I^{j}}+\frac{\partial g_{jm}}{\partial I^{i}}-%
\frac{\partial g_{ij}}{\partial I^{m}}\right).  \label{Levi-Civita}
\end{equation}
The knowledge of the affine connections $\Gamma _{ij}^{k}$ allows the introduction of the \textit{curvature tensor} $ R^{l}_{ijk}=R^{l}_{ijk}(I|\theta)$ of the manifold $\mathcal{M}_{\theta}$:
\begin{equation}\label{curvature}
R^{l}_{ijk}=\frac{\partial}{\partial I^{i}}\Gamma^{l}_{jk}-\frac{\partial}{%
\partial I^{j}}\Gamma^{l}_{ik}+\Gamma^{l}_{im}\Gamma^{m}_{jk}-
\Gamma^{l}_{jm}\Gamma^{m}_{ik},
\end{equation}
which is also derived from the knowledge of the metric tensor $g_{ij}(I|\theta)$ and its first and second partial derivatives.

\textbf{Axiom 2} postulates the probabilistic nature of the manifold $\mathcal{M}_{\theta}$, in particular, the existence of the distribution function $dp(I|\theta)$ and the information potential $\mathcal{S}(I|\theta)$. Equivalently, this axiom provides a formal definition for the information potential $\mathcal{S}(I|\theta)$ when one starts from the knowledge of the parametric family (\ref{DF}). The probability density $\rho(I|\theta)$ of the parametric family (\ref{DF}) obeys the transformation rule of a tensorial density:
\begin{equation}\label{tr.density}
\rho(\Theta|\theta)=\rho(I|\theta)\left|\frac{\partial
\Theta}{\partial I}\right|^{-1}
\end{equation}
under coordinate change $\Theta(I):\mathcal{R}_{I}\rightarrow\mathcal{R}_{\Theta}$ of the statistical manifold $\mathcal{M}_{\theta}$. The covariance of the metric tensor $g_{ij}\left( I|\theta \right)$:
\begin{equation}
g_{ij}\left( \Theta|\theta \right)=\frac{\partial I^{m}}{\partial\Theta^{i}}%
\frac{\partial I^{n}}{\partial\Theta^{j}}g_{mn}\left( I|\theta
\right)
\end{equation}
implies that the pre-factor of the invariant volume element (\ref{inv.volume}) also behaves as a tensorial density:
\begin{equation}
\sqrt{\left\vert \frac{g_{ij}\left( \Theta|\theta \right) }{2\pi }%
\right\vert }=\sqrt{\left\vert \frac{g_{ij}\left( I|\theta \right) }{2\pi }%
\right\vert }\left|\frac{\partial \Theta}{\partial I}\right|^{-1}.
\end{equation}
Admitting that the metric tensor determinant $\left|g_{ij}(I|\theta)\right|$ is non-vanishing everywhere, it is possible to introduce \textit{probability weight} $\omega(I|\theta)$:
\begin{equation}\label{scalar.weight}
\omega(I|\theta)=\rho(I|\theta)\sqrt{\left\vert 2\pi g^{ij}\left( I|\theta \right)\right\vert },
\end{equation}
which represents a \emph{scalar function} defined on the manifold $\mathcal{M}_{\theta}$. Since the statistical manifold $\mathcal{M}_{\theta}$ possesses a Riemannian structure, the integration over the usual volume element $dI$ can be replaced by the invariant volume element $d\mu(I|\theta)$. This consideration allows to rephrase the parametric family (\ref{DF}) in the following equivalent representation:
\begin{equation}
dp(I|\theta)=\omega(I|\theta)d\mu(I|\theta),
\end{equation}
which explicitly exhibits the covariance of the distribution function under the coordinate reparametrizations of the manifold $\mathcal{M}_{\theta}$. The information potential $\mathcal{S}(I|\theta)$ is defined by the logarithm of the probability weight $\omega(I|\theta)$:
\begin{equation}
\mathcal{S}(I|\theta)=\log \omega(I|\theta),
\end{equation}
which also represents a scalar function defined on the statistical manifold $\mathcal{M}_{\theta}$. As discussed in section \ref{Relevance}, the negative of the information potential $\mathcal{S}(I|\theta)$ can be regarded as a \emph{local invariant measure of the information content} in the framework of information theory. Additionally, $\mathcal{S}(I|\theta)$ can be identified with the \emph{scalar entropy of a closed system} in the framework of classical fluctuation theory \cite{Vel.GEO}. Given the probability density $\rho(I|\theta)$, the information potential $\mathcal{S}(I|\theta)$ depends on the metric tensor $g_{ij}(I|\theta)$ of the statistical manifold $\mathcal{M}_{\theta}$. \textbf{Axiom 1} postulates the existence of this tensor, but its specific definition is still arbitrary. \textbf{Axiom 3} eliminates such an ambiguity. In fact, it establishes a direct connection between the distance notion (\ref{fluct.dist}) and the information dissipation (\ref{rate}), or equivalently, between the metric tensor $g_{ij}(I|\theta)$ and the information potential $\mathcal{S}(I|\theta)$.

\begin{theorem}
The metric tensor $g_{ij}(I|\theta)$ can be identified with the negative of the \textbf{covariant Hessian} $\mathcal{H}_{ij}(I|\theta)$ of the information potential $\mathcal{S}\left( I|\theta \right)$:
\begin{equation}
g_{ij}\left( I|\theta \right)
=-\mathcal{H}_{ij}(I|\theta)=-D_{i}D_{j}\mathcal{S}\left( I|\theta
\right) . \label{cov.EH}
\end{equation}
\end{theorem}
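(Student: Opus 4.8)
The plan is to convert the integral statement of \textbf{Axiom 3} into a pointwise identity valid along geodesics, and then to lift that identity to a full tensor equation by a polarization argument that exploits the symmetry of the objects involved. First I would evaluate the length appearing in \textbf{Axiom 3} using the unitary affine parametrization (\ref{affine}): since $g_{ij}\dot{I}_{g}^{i}\dot{I}_{g}^{j}=1$ along the geodesic, the length of any interval is simply $\Delta s=\int_{t_{1}}^{t_{2}}dt=t_{2}-t_{1}$. With $\Phi(t)=d\mathcal{S}/dt$ from (\ref{rate}), \textbf{Axiom 3} becomes $t_{2}-t_{1}=\Phi(t_{1})-\Phi(t_{2})$. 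As this holds for every subinterval, I would fix $t_{1}$, treat $t_{2}=t$ as a free variable, and differentiate once, obtaining the pointwise relation
\[
\frac{d^{2}\mathcal{S}[I_{g}(t)|\theta]}{dt^{2}}=-1
\]
along every geodesic carrying a unitary affine parametrization.

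The crux is to recognize the left-hand side as a contraction of the covariant Hessian with the geodesic tangent. By the chain rule $d\mathcal{S}/dt=\partial_{k}\mathcal{S}\,\dot{I}^{k}$ and hence
\[
\frac{d^{2}\mathcal{S}}{dt^{2}}=\partial_{i}\partial_{j}\mathcal{S}\,\dot{I}^{i}\dot{I}^{j}+\partial_{k}\mathcal{S}\,\ddot{I}^{k}.
\]
Being on a geodesic is what makes the argument work: the affinely parametrized geodesic equation $\ddot{I}^{k}+\Gamma_{ij}^{k}\dot{I}^{i}\dot{I}^{j}=0$, with $\Gamma_{ij}^{k}$ the Levi-Civita connection (\ref{Levi-Civita}), lets me eliminate the acceleration $\ddot{I}^{k}$ and yields
\[
\frac{d^{2}\mathcal{S}}{dt^{2}}=\left(\partial_{i}\partial_{j}\mathcal{S}-\Gamma_{ij}^{k}\partial_{k}\mathcal{S}\right)\dot{I}^{i}\dot{I}^{j}=\mathcal{H}_{ij}(I|\theta)\,\dot{I}^{i}\dot{I}^{j},
\]
because for a scalar $D_{j}\mathcal{S}=\partial_{j}\mathcal{S}$ and therefore $\mathcal{H}_{ij}=D_{i}D_{j}\mathcal{S}=\partial_{i}\partial_{j}\mathcal{S}-\Gamma_{ij}^{k}\partial_{k}\mathcal{S}$, the connection being torsionless. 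The Christoffel term that promotes the ordinary Hessian to the covariant Hessian is thus furnished exactly by the geodesic equation; identifying this mechanism is, I expect, the step that carries the real content of the proof and the main obstacle to presenting it cleanly.

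Combining the two pointwise results with the normalization $g_{ij}\dot{I}^{i}\dot{I}^{j}=1$ gives, at the base point of the geodesic,
\[
\left[g_{ij}(I|\theta)+\mathcal{H}_{ij}(I|\theta)\right]\dot{I}^{i}\dot{I}^{j}=0.
\]
To remove the restriction to a single curve I would use the fact that through any point $P\in\mathcal{M}_{\theta}$ and along any direction there passes a geodesic whose unit tangent realizes that direction; consequently the symmetric quadratic form associated with the symmetric tensor $g_{ij}+\mathcal{H}_{ij}$ annihilates every tangent vector at $P$. A symmetric bilinear form that vanishes on all vectors is identically zero, so $g_{ij}(I|\theta)=-\mathcal{H}_{ij}(I|\theta)=-D_{i}D_{j}\mathcal{S}(I|\theta)$ holds at every point, establishing (\ref{cov.EH}).
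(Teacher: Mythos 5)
Your proposal is correct and follows essentially the same route as the paper: derive $d^{2}\mathcal{S}/dt^{2}=-1$ along affinely parametrized geodesics from \textbf{Axiom 3}, use the geodesic equation to trade the acceleration term for the Christoffel correction that turns the ordinary Hessian into the covariant Hessian $\mathcal{H}_{ij}$, and combine with the normalization $g_{ij}\dot{I}^{i}\dot{I}^{j}=1$ to get $\left(g_{ij}+\mathcal{H}_{ij}\right)\dot{I}^{i}\dot{I}^{j}=0$. Your explicit polarization step (geodesics exist in every direction through every point, and a symmetric bilinear form whose quadratic form vanishes is zero) is simply the careful spelling-out of what the paper compresses into the phrase ``its covariant character leads to Eq.~(\ref{cov.EH}).''
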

\begin{corollary}\label{concavity}
\textit{The information potential} $\mathcal{S}(I|\theta)$ \textit{is locally concave everywhere} and the metric tensor $g_{ij}(I|\theta)$ is \textit{positive definite} on the statistical manifold $\mathcal{M}_{\theta}$.
\end{corollary}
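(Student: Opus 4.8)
The plan is to read off both assertions directly from the tensor identity (\ref{cov.EH}) of the preceding theorem, namely $g_{ij}(I|\theta)=-\mathcal{H}_{ij}(I|\theta)=-D_{i}D_{j}\mathcal{S}(I|\theta)$, together with the Riemannian signature postulated in \textbf{Axiom 1}. Since $g_{ij}$ is the metric of a genuine Riemannian (not merely pseudo-Riemannian) structure, it is positive definite at every point of $\mathcal{M}_{\theta}$, which is the second claim. The first claim -- local concavity of $\mathcal{S}$ -- is then the equivalent restatement that the covariant Hessian $\mathcal{H}_{ij}=-g_{ij}$ is negative definite everywhere. Thus the two statements of the corollary are two faces of the single inequality $g_{ij}\succ 0$, linked through (\ref{cov.EH}).

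First I would make precise what \emph{locally concave} should mean on a manifold, since the ordinary Hessian $\partial_{i}\partial_{j}\mathcal{S}$ is not tensorial and its definiteness is not coordinate invariant; the invariant notion is geodesic concavity. To connect the two, I would differentiate $\mathcal{S}$ twice along an arbitrary geodesic $I_{g}(t)$. Using the geodesic equation $\ddot{I}_{g}^{k}=-\Gamma^{k}_{ij}\dot{I}_{g}^{i}\dot{I}_{g}^{j}$ and the scalar covariant Hessian $D_{i}D_{j}\mathcal{S}=\partial_{i}\partial_{j}\mathcal{S}-\Gamma^{k}_{ij}\partial_{k}\mathcal{S}$, the Christoffel contributions cancel and one obtains
\[
\frac{d^{2}}{dt^{2}}\mathcal{S}[I_{g}(t)]=\mathcal{H}_{ij}\,\dot{I}_{g}^{i}\dot{I}_{g}^{j}=-g_{ij}\,\dot{I}_{g}^{i}\dot{I}_{g}^{j}.
\]
Because $g_{ij}$ is positive definite and $\dot{I}_{g}\neq 0$, the right-hand side is strictly negative, so $\mathcal{S}$ is strictly concave along every geodesic, which is exactly the assertion that $\mathcal{S}$ is locally concave everywhere. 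The same computation run backwards shows the equivalence, so establishing either property establishes both.

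The main obstacle is pinning down the sign -- justifying positive definiteness rather than mere non-degeneracy, since (\ref{cov.EH}) alone ties the overall sign to the postulated signature. I would settle this internally through \textbf{Axiom 3}. Evaluating (\ref{metric}) on a geodesic carrying the unitary affine parametrization (\ref{affine}), the length of the interval $(t_{1},t_{2})$ is simply $t_{2}-t_{1}$, so (\ref{metric}) reads $t_{2}-t_{1}=\Phi(t_{1})-\Phi(t_{2})$ for every subinterval; differentiating the information dissipation (\ref{rate}) gives $d\Phi/dt=-1$, that is $d^{2}\mathcal{S}/dt^{2}=-1<0$ along every unit-affine geodesic. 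Compared with the identity of the previous paragraph, and using $g_{ij}\dot{I}_{g}^{i}\dot{I}_{g}^{j}=1$ under the unitary parametrization, the two relations agree and are strictly negative; since a geodesic can be launched in any direction from any point and rescaled to unit speed, this fixes $g_{ij}\succ 0$ and hence the strict concavity of $\mathcal{S}$ throughout $\mathcal{M}_{\theta}$. The delicate point I would flag is that concavity must be asserted in this invariant geodesic sense: negative definiteness of the ordinary Hessian $\partial_{i}\partial_{j}\mathcal{S}$ holds only at critical points of $\mathcal{S}$ or in geodesic normal coordinates, whereas (\ref{cov.EH}) and the geodesic computation are precisely what render the statement coordinate independent.
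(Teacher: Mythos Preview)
Your proposal is correct and follows essentially the same route as the paper: the paper's own proof (given jointly with Theorem~1) differentiates $\mathcal{S}$ twice along a unit-affine geodesic, invokes \textbf{Axiom 3} to obtain $d^{2}\mathcal{S}/ds^{2}=-1$ (equation (\ref{alter.post3})), and then identifies this with $\mathcal{H}_{ij}\dot{I}_{g}^{i}\dot{I}_{g}^{j}$ via the geodesic equation, exactly as you do. Your added care in distinguishing geodesic concavity from coordinate-Hessian concavity is a welcome clarification that the paper leaves implicit.
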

\begin{proof}
The searching of the curve with minimal length (\ref{distance}) between two arbitrary points $(P,Q)$ is a variational problem that leads to the so-called \emph{geodesic differential equations} \cite{Berger}:
\begin{equation} \label{geodesic}
\dot{I}_{g}^{k}(t)D_{k}\dot{I}_{g}^{i}(t)=\ddot{I}_{g}^{i}(t)+
\Gamma^{i}_{mn}\left[I_{g}(t)|\theta\right]\dot{I}_{g}^{m}(t)\dot{I}%
_{g}^{n}(t)=0,
\end{equation}
which describes the geodesics $I_{g}(t)$ with a unitary affine parametrization. Equations (\ref{rate}) and (\ref{metric}) can be rephrased as follows:
\begin{equation}\label{alter.post3}
\Delta s=-\Delta\Phi(s)\rightarrow\frac{d\Phi(s)}{ds}=
\frac{d^{2}\mathcal{S}}{ds^{2}}=-1.
\end{equation}
Considering the geodesic differential equations (\ref{geodesic}), the constraint (\ref{alter.post3}) is rewritten as:
\begin{eqnarray}  \label{razonamiento}
\frac{d^{2}\mathcal{S}}{ds^{2}} = \ddot{I}_{g}^{k}\frac{\partial
\mathcal{S}}{\partial I^{k}}+\dot{I}_{g}^{i}\dot{I}_{g}^{j}\frac{%
\partial^{2} \mathcal{S}}{\partial I^{i}\partial I^{j}}
=\dot{I}_{g}^{i}\dot{I}_{g}^{j}\left\{\frac{\partial^{2} \mathcal{S}}{%
\partial I^{i}\partial I^{j}}-\Gamma^{k}_{ij}\frac{\partial \mathcal{S}}{%
\partial I^{k}}\right\},
\end{eqnarray}
which involves the \textit{covariant Hessian} $\mathcal{H}_{ij}$:
\begin{equation}\label{Hessian}
\mathcal{H}_{ij}=D_{i}D_{j}\mathcal{S}=\frac{\partial^{2} \mathcal{S}}{%
\partial I^{i}\partial I^{j}}-\Gamma^{k}_{ij}\frac{\partial \mathcal{S}}{%
\partial I^{k}}.
\end{equation}
Combining equations (\ref{razonamiento})-(\ref{Hessian}) and the constraint (\ref{affine}), one obtains the following expression:
\begin{equation}
(g_{ij}+\mathcal{H}_{ij})\dot{I}_{g}^{i}\dot{I}_{g}^{j}=0.
\end{equation}
Its covariant character leads to Eq.(\ref{cov.EH}). \textbf{Corollary \ref{concavity}}, that is, the concave character of the information potential $\mathcal{S}(I|\theta)$ and the positive definition of the metric tensor $g_{ij}(I|\theta)$ are direct consequences of equation (\ref{alter.post3}).
\end{proof}
\begin{corollary}
The metric tensor $g_{ij}=g_{ij}\left( I|\theta \right) $ can be obtained from the probability density $\rho=\rho\left( I|\theta \right)$ through the following set of covariant second-order partial differential equations:
\begin{equation}  \label{cov.equation}
g_{ij}=-\frac{\partial^{2}\log\rho}{\partial I^{i}\partial I^{j}}+
\Gamma^{k}_{ij}\frac{\partial\log\rho}{\partial I^{k}} +\frac{\partial}{\partial I^{i}}\Gamma^{k}_{jk}-\Gamma^{k}_{ij}\Gamma^{l}_{kl}.
\end{equation}
The admissible solutions for the metric tensor $g_{ij}$ should be finite and differentiable everywhere, including also, the boundary $\partial\mathcal{M}_{\theta}$ of the statistical manifold $\mathcal{M}_{\theta}$.
\end{corollary}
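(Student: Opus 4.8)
The plan is to obtain equation (\ref{cov.equation}) directly from the theorem (\ref{cov.EH}) by eliminating the information potential $\mathcal{S}(I|\theta)$ in favour of the probability density $\rho(I|\theta)$. First I would insert the explicit coordinate form of the covariant Hessian (\ref{Hessian}) into the identification $g_{ij}=-D_{i}D_{j}\mathcal{S}$, giving $g_{ij}=-\partial_{i}\partial_{j}\mathcal{S}+\Gamma^{k}_{ij}\partial_{k}\mathcal{S}$, where $\partial_{i}=\partial/\partial I^{i}$. The whole task then reduces to expressing the first and second partial derivatives of $\mathcal{S}$ through $\rho$ and the Christoffel symbols.

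For this I would use the definition of the information potential: combining $\mathcal{S}=\log\omega$ with the probability weight (\ref{scalar.weight}) gives $\mathcal{S}=\log\rho+\frac{1}{2}\log|2\pi g^{ij}|$, i.e. $\mathcal{S}=\log\rho-\frac{1}{2}\log|g_{ij}|$ up to an additive constant that differentiation annihilates. The single geometric ingredient required is the standard Levi-Civita identity relating the contracted connection to the logarithmic derivative of the metric determinant,
\begin{equation}
\Gamma^{m}_{km}=\frac{1}{2}\frac{\partial}{\partial I^{k}}\log\left|g_{ij}\right|,
\end{equation}
which follows from (\ref{Levi-Civita}): two of the three terms cancel owing to the symmetry of $g^{ml}$, leaving $\Gamma^{m}_{km}=\frac{1}{2}g^{ml}\partial_{k}g_{ml}$, and Jacobi's formula for the derivative of a determinant supplies the rest. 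Consequently $\partial_{k}\mathcal{S}=\partial_{k}\log\rho-\Gamma^{m}_{km}$.

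Differentiating once more yields $\partial_{i}\partial_{j}\mathcal{S}=\partial_{i}\partial_{j}\log\rho-\partial_{i}\Gamma^{m}_{jm}$, and substituting both derivatives into $g_{ij}=-\partial_{i}\partial_{j}\mathcal{S}+\Gamma^{k}_{ij}\partial_{k}\mathcal{S}$ and relabelling dummy indices reproduces exactly the four terms of (\ref{cov.equation}): the Hessian $-\partial_{i}\partial_{j}\log\rho$, the connection term $\Gamma^{k}_{ij}\partial_{k}\log\rho$, the derivative $\partial_{i}\Gamma^{k}_{jk}$, and the quadratic term $-\Gamma^{k}_{ij}\Gamma^{l}_{kl}$. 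The final assertion on admissible solutions then follows from the finiteness and differentiability already imposed on $\rho(I|\theta)$ and on the metric, together with the boundary behaviour guaranteed by Axiom 4.

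I expect the only genuine subtlety to be conceptual rather than computational: (\ref{cov.equation}) is not an explicit formula for $g_{ij}$ but an implicit, coupled second-order system, since the symbols $\Gamma^{k}_{ij}$ and the determinant hidden inside them are themselves built from $g_{ij}$ via (\ref{Levi-Civita}). The substitution above establishes only that the equation is a \emph{necessary} consequence of the axioms; showing that it actually determines a positive-definite metric—existence and uniqueness of solutions compatible with the prescribed boundary conditions—is the harder matter, which I would flag as the main obstacle and leave outside the scope of this corollary.
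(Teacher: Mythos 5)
Your proposal is correct and follows essentially the same route as the paper's own proof: both start from $g_{ij}=-D_{i}D_{j}\mathcal{S}$, rewrite $\mathcal{S}=\log\rho-\frac{1}{2}\log\left|g_{ij}\right|$ (up to a constant) via the probability weight, and invoke the contracted Levi-Civita identity $\Gamma^{k}_{ik}=\frac{1}{2}\partial_{i}\log\left|g_{ij}\right|$ to eliminate the determinant terms. Your version merely spells out the substitution and index bookkeeping that the paper leaves implicit, and your closing caveat about the implicit, self-referential character of the system matches the paper's own deferral of existence and uniqueness to section \ref{Consequences}.
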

\begin{proof}
Expression (\ref{cov.equation}) is derived from equation (\ref{cov.EH}) rewriting the information potential as $\mathcal{S}\left( I|\theta \right)\equiv\log\rho\left( I|\theta \right)-\log\sqrt{|g_{ij}\left( I|\theta \right)/2\pi|}$ and considering the following identity:
\begin{equation}
\Gamma^{j}_{ij}\left( I|\theta \right)\equiv\frac{\partial\log\sqrt{|g_{ij}\left( I|\theta \right)/2\pi|}}{\partial I^{i}},
\end{equation}
which is a known property of the Levi-Civita connection (\ref{Levi-Civita}).
\end{proof}

\textbf{Axiom 4} talks about the asymptotic behavior of the distribution function (\ref{EinsteinPostulate}) for any point $I_{b}$ on the boundary $\partial\mathcal{M}_{\theta}$:
\begin{equation}  \label{boundary.cond}
\lim_{I\rightarrow I_{b}}\rho(I|\theta)=\lim_{I\rightarrow I_{b}}\frac{%
\partial}{\partial I^{i}}\rho(I|\theta)=0.
\end{equation}
These last conditions are necessary to obtain the general fluctuation theorems (\ref{equi.cond})-(\ref{assoc.fluct}) reviewed in the previous section. Moreover, this axiom will be employed to analyze the character of stationary points (maxima and minima) of the information potential $\mathcal{S}(I|\theta)$.

\begin{remark}
The boundary conditions (\ref{boundary.cond}) are independent from the admissible coordinate representation $\mathcal{R}_{I}$ of the statistical manifold $\mathcal{M}_{\theta}$. Moreover, the probability weight $\omega(I|\theta)$ vanishes on the boundary $\partial \mathcal{M}_{\theta}$ of the manifold $\mathcal{M}_{\theta}$.
\end{remark}
\begin{proof}
This remark is a direct consequence of the transformation rule of the probability density (\ref{tr.density}) as well as the ones associated with its partial derivatives:
\begin{equation}  \label{drho.tr}
\frac{\partial \rho \left( \Theta |\theta \right) }{\partial \Theta ^{i}} =
\frac{\partial I^{j}}{\partial \Theta ^{i}}\left\{ \frac{\partial \rho
\left( I|\theta \right) }{\partial I^{j}}-\rho \left( I|\theta \right) \frac{%
\partial }{\partial I^{j}}\log \left\vert \frac{\partial \Theta }{\partial I}%
\right\vert \right\} \left\vert \frac{\partial \Theta }{\partial I}%
\right\vert ^{-1}
\end{equation}
under a coordinate change $\Theta(I):\mathcal{R}_{I}\rightarrow\mathcal{R}_{\Theta}$ with Jacobian $\left|\partial\Theta/\partial I \right|$ finite and differentiable everywhere. Since the metric tensor determinant $\left|g_{ij}(I|\theta)\right|$ is non-vanishing everywhere, \textbf{Axiom 4} directly implies the vanishing of the probability weight $\omega(I|\theta)$ on the boundary $\partial \mathcal{M}_{\theta}$ of the statistical manifold $\mathcal{M}_{\theta}$.
\end{proof}

\section{Geometric reinterpretation of the statistical description}\label{Consequences}

The question about the \emph{existence and uniqueness} of solutions obtained from the problem (\ref{cov.equation}) cannot be fully analyzed in this work because of its complexity. This section is devoted to discuss some consequences derived from the existence of a given particular solution $g_{ij}(I|\theta)$.

\subsection{Gaussian representation}
\begin{definition}
The covariant components of the \textbf{gradiental vector} $\psi _{i}\left( I|\theta \right) $ are defined from the information potential $\mathcal{S}\left( I|\theta \right)$ as follows:
\begin{equation}
\psi _{i}\left( I|\theta \right) =-D_{i}\mathcal{S}\left( I|\theta
\right) \equiv-\partial \mathcal{S}\left( I|\theta \right) /\partial
I^{i}.  \label{cov.DGF}
\end{equation}
Using the metric tensor $g^{ij}(I|\theta)$, it is possible to obtain its contravariant counterpart $\psi^{i}(I|\theta)$:
\begin{equation}
\psi ^{i}\left( I|\theta \right) =g^{ij}\left( I|\theta \right) \psi
_{j}\left( I|\theta \right),
\end{equation}
as well as its the square norm $\psi ^{2}=\psi ^{2}(I|\theta)$:
\begin{equation}
\psi ^{2}(I|\theta)=\psi ^{i}\left( I|\theta \right)\psi _{i}\left(
I|\theta \right).
\end{equation}
\end{definition}
\begin{theorem}
The information potential $\mathcal{S}(I|\theta )$ can be expressed in terms of the square norm of the gradiental vector as follows:
\begin{equation}
\mathcal{S}(I|\theta )=\mathcal{P}(\theta )-\frac{1}{2}%
\psi^{2}(I|\theta ), \label{Sdecomposition}
\end{equation}
where $\mathcal{P}(\theta )$ is a certain function on control parameters $\theta$, which is hereafter referred to as the \textbf{gaussian
potential}.
\end{theorem}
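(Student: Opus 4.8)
The plan is to prove that the combination $\mathcal{S}(I|\theta)+\frac{1}{2}\psi^{2}(I|\theta)$ has identically vanishing gradient in the stochastic variables $I$, so that it reduces to a function of the control parameters $\theta$ alone, which is then defined to be $\mathcal{P}(\theta)$. The key tool is a reinterpretation of the preceding Theorem. Since the gradiental vector (\ref{cov.DGF}) is precisely the covariant gradient of the information potential, $\psi_{j}=-D_{j}\mathcal{S}$, the identity (\ref{cov.EH}) can be recast as
\[
D_{i}\psi_{j}(I|\theta)=-D_{i}D_{j}\mathcal{S}(I|\theta)=g_{ij}(I|\theta),
\]
that is, the covariant derivative of the gradiental covector coincides with the metric tensor itself.

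First I would compute the covariant gradient of the scalar square norm $\psi^{2}=g^{jk}\psi_{j}\psi_{k}$. Because \textbf{Axiom 1} imposes metric compatibility (\ref{Dg}), which also forces $D_{i}g^{jk}=0$, the operator $D_{i}$ passes through the inverse metric and acts only on the two factors $\psi_{j},\psi_{k}$. Substituting the identity $D_{i}\psi_{j}=g_{ij}$ and contracting the resulting $g^{jk}g_{ij}=\delta^{k}_{i}$ collapses both terms into the compact relation
\[
D_{i}\psi^{2}(I|\theta)=2\psi_{i}(I|\theta).
\]
Since $\psi^{2}$ and $\mathcal{S}$ are scalar fields, their covariant derivatives reduce to ordinary partial derivatives, and using $\psi_{i}=-\partial_{i}\mathcal{S}$ this becomes $\partial_{i}\bigl(\mathcal{S}+\frac{1}{2}\psi^{2}\bigr)=0$ for every index $i$. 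As $\mathcal{M}_{\theta}$ is assumed a simply connected domain, a field with vanishing gradient on it is constant in $I$; hence $\mathcal{S}+\frac{1}{2}\psi^{2}$ depends only on $\theta$, and setting this equal to $\mathcal{P}(\theta)$ yields the decomposition (\ref{Sdecomposition}).

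I do not expect a serious obstacle here, as the argument reduces to a short tensorial identity. The two points demanding care are the torsionless character of $D_{i}$, which guarantees that $D_{i}D_{j}\mathcal{S}$ is symmetric and hence compatible with the symmetric metric in the first displayed step, and the metric compatibility (\ref{Dg}), which is what permits pulling $g^{jk}$ through $D_{i}$ when differentiating $\psi^{2}$. The final passage from a vanishing gradient to a purely $\theta$-dependent integration constant is where the connectedness of $\mathcal{M}_{\theta}$ is genuinely used.
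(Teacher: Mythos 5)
Your proposal is correct and follows essentially the same route as the paper: both arguments differentiate the combination $\mathcal{S}+\frac{1}{2}\psi^{2}$ covariantly, use the metric compatibility (\ref{Dg}) together with the identity $D_{i}\psi_{j}=-D_{i}D_{j}\mathcal{S}=g_{ij}$ from (\ref{cov.EH}) to show this covariant gradient vanishes, and conclude that the scalar quantity depends only on $\theta$. Your explicit appeal to the connectedness of $\mathcal{M}_{\theta}$ in the final step is a point the paper leaves implicit, but it is a refinement of the same argument, not a different one.
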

\begin{proof}
Let us introduce the scalar function $\mathcal{P}(I|\theta )$:
\begin{equation}\label{PP}
\mathcal{P}(I|\theta )=\mathcal{S}(I|\theta )+\frac{1}{2}%
g^{ij}(I|\theta )\psi _{i}(I|\theta )\psi _{j}(I|\theta ).
\end{equation}%
It is easy to verify that its covariant derivatives:
\begin{eqnarray}
&&D_{k}\mathcal{P}(I|\theta )=D_{k}\mathcal{S}(I|\theta )+ \frac{1}{2}\left\{
\psi _{i}(I|\theta )\psi _{j}(I|\theta
)D_{k}g^{ij}(I|\theta )+\right.   \\
&&\left. +g^{ij}(I|\theta )\left[ \psi _{i}(I|\theta )D_{k}\psi
_{j}(I|\theta )+\psi _{j}(I|\theta )D_{k}\psi _{i}(I|\theta )\right]
\right\} \nonumber
\end{eqnarray}
vanish as direct consequences of the metric tensor properties (\ref{Dg}) and (\ref{cov.EH}), as well as definition
(\ref{cov.DGF}) of the gradiental vector $\psi _{i}(I|\theta )$. Since the covariant derivatives of any scalar
function are given by the usual partial derivatives:
\begin{equation}
D_{k}\mathcal{P}(I|\theta )=\frac{\partial }{\partial
I^{k}}\mathcal{P} (I|\theta )=0,
\end{equation}%
the scalar function $\mathcal{P}(I|\theta )$ can only depend on the control parameters $\theta$:
\begin{equation}
\mathcal{P}(I|\theta )\equiv \mathcal{P}(\theta ).
\end{equation}
Mathematically speaking, the scalar function (\ref{PP}) can be regarded as a first integral of the set of covariant differential equations (\ref{cov.equation}).
\end{proof}
\begin{corollary}\label{entropy.planck}
The value of information potential $\mathcal{S}(I|\theta )$ at all its
extreme points derived from the stationary condition:
\begin{equation}\label{global.stat}
\psi^{2}(\bar{I}|\theta)=0
\end{equation}
is exactly given by the gaussian potential
$\mathcal{P}(\theta)$.
\end{corollary}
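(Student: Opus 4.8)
The plan is to read the result off the decomposition established in the preceding theorem, so the argument reduces to two short steps, with essentially all the work already done in proving equation (\ref{Sdecomposition}).

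First I would clarify why the condition $\psi^{2}(\bar{I}|\theta)=0$ genuinely selects the extreme points of the scalar $\mathcal{S}(I|\theta)$. By definition the square norm is $\psi^{2}=g^{ij}(I|\theta)\psi_{i}(I|\theta)\psi_{j}(I|\theta)$, and by \textbf{Corollary \ref{concavity}} the metric tensor $g_{ij}(I|\theta)$ (hence its inverse $g^{ij}$) is positive definite everywhere on $\mathcal{M}_{\theta}$. A positive definite quadratic form vanishes only on the zero vector, so $\psi^{2}(\bar{I}|\theta)=0$ holds if and only if every covariant component vanishes, $\psi_{i}(\bar{I}|\theta)=-\partial\mathcal{S}(\bar{I}|\theta)/\partial I^{i}=0$. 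This is precisely the usual stationary condition for the information potential, confirming that the points $\bar{I}$ are the extreme points of $\mathcal{S}$.

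The second step is an immediate substitution. From the decomposition (\ref{Sdecomposition}), namely $\mathcal{S}(I|\theta)=\mathcal{P}(\theta)-\tfrac{1}{2}\psi^{2}(I|\theta)$, evaluating at any extreme point $\bar{I}$ and using $\psi^{2}(\bar{I}|\theta)=0$ yields $\mathcal{S}(\bar{I}|\theta)=\mathcal{P}(\theta)$, which is the claim.

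I do not expect a genuine obstacle here: the substantive content, namely that $\mathcal{P}(I|\theta)$ is constant on $\mathcal{M}_{\theta}$ and depends only on $\theta$, was already secured in the proof of the decomposition theorem via $D_{k}\mathcal{P}=0$. The only point requiring a word of justification is the equivalence in the first step, which rests entirely on the positive-definiteness of $g^{ij}$; once that is invoked, the corollary follows by direct evaluation.
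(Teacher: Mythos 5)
Your proposal is correct and follows exactly the route the paper intends: the corollary is an immediate consequence of the decomposition (\ref{Sdecomposition}), obtained by evaluating it at a point where $\psi^{2}(\bar{I}|\theta)=0$. Your additional remark that positive definiteness of $g^{ij}$ makes $\psi^{2}=0$ equivalent to the stationarity of $\mathcal{S}$ is a sound and welcome clarification, but it does not change the substance of the argument.
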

\begin{corollary}
The distribution function (\ref{EinsteinPostulate}) admits the following \textbf{gaussian representation}:
\begin{equation}
dp(I|\theta )=\frac{1}{\mathcal{Z}(\theta )}\exp \left[ -\frac{1}{2}%
\psi^{2} (I|\theta )\right] d\mu (I|\theta ). \label{universal}
\end{equation}
Here, the factor $\mathcal{Z}(\theta )$ is related to the gaussian potential $\mathcal{P}(\theta )$ as follows:
\begin{equation}\label{gaussianPlanck}
\mathcal{P}(\theta )=-\log \mathcal{Z}(\theta ),
\end{equation}
which is hereafter referred to as the \textbf{gaussian partition function}.
\end{corollary}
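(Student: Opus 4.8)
The plan is to substitute the decomposition of the information potential established in the preceding theorem directly into the Einstein postulate (Axiom 2). Starting from the distribution function in the form (\ref{EinsteinPostulate}), namely $dp(I|\theta)=\exp[\mathcal{S}(I|\theta)]d\mu(I|\theta)$, I would replace $\mathcal{S}(I|\theta)$ by its expression (\ref{Sdecomposition}), $\mathcal{S}(I|\theta)=\mathcal{P}(\theta)-\frac{1}{2}\psi^{2}(I|\theta)$. Because the exponential of a sum factorizes, this immediately yields $dp(I|\theta)=\exp[\mathcal{P}(\theta)]\exp[-\frac{1}{2}\psi^{2}(I|\theta)]d\mu(I|\theta)$.

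The crucial point, inherited from the preceding theorem, is that $\mathcal{P}(\theta)$ depends only on the control parameters $\theta$ and not on the stochastic variables $I$; hence the prefactor $\exp[\mathcal{P}(\theta)]$ is constant over the manifold $\mathcal{M}_{\theta}$ and can be treated as a mere normalization. I would therefore define $\mathcal{Z}(\theta)$ through $\exp[\mathcal{P}(\theta)]\equiv 1/\mathcal{Z}(\theta)$, which is precisely the relation (\ref{gaussianPlanck}), $\mathcal{P}(\theta)=-\log\mathcal{Z}(\theta)$. Inserting this identification into the factorized expression recovers the announced gaussian representation (\ref{universal}).

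To close the argument and justify the name, I would fix $\mathcal{Z}(\theta)$ by the normalization requirement $\int_{\mathcal{M}_{\theta}}dp(I|\theta)=1$, which forces $\mathcal{Z}(\theta)=\int_{\mathcal{M}_{\theta}}\exp[-\frac{1}{2}\psi^{2}(I|\theta)]d\mu(I|\theta)$ and thereby explains the terminology \emph{gaussian partition function}. There is no genuine obstacle here: the result is essentially a rewriting once the decomposition (\ref{Sdecomposition}) is available. The only point demanding care is the convergence of this integral, which is guaranteed by the boundary conditions of Axiom 4 ensuring that the original distribution (\ref{EinsteinPostulate}) is normalizable over $\mathcal{M}_{\theta}$.
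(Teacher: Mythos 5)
Your proposal is correct and follows exactly the route the paper intends: the corollary is an immediate consequence of substituting the decomposition (\ref{Sdecomposition}) into the distribution function (\ref{EinsteinPostulate}) and identifying $\mathcal{Z}(\theta)=\exp\left[-\mathcal{P}(\theta)\right]$, which is why the paper states it without a separate proof. Your additional remark that normalization fixes $\mathcal{Z}(\theta)$ as the integral of $\exp\left[-\frac{1}{2}\psi^{2}(I|\theta)\right]$ over $\mathcal{M}_{\theta}$ correctly explains the terminology and is consistent with the paper's later observations.
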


\subsection{Maximum and completeness theorems}
\begin{theorem}
The information potential $\mathcal{S}(I|\theta)$ exhibits a unique stationary point $\bar{I}$ in the statistical manifold $\mathcal{M}_{\theta}$, which corresponds to its global maximum.
\end{theorem}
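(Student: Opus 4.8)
The plan is to separate the statement into three parts: that every stationary point is a global maximum, that at least one stationary point exists, and that there is only one. The global-maximum property falls out immediately from the decomposition (\ref{Sdecomposition}), $\mathcal{S}(I|\theta)=\mathcal{P}(\theta)-\frac{1}{2}\psi^{2}(I|\theta)$. Since the metric tensor $g_{ij}(I|\theta)$ is positive definite (Corollary \ref{concavity}), the square norm $\psi^{2}=g^{ij}\psi_{i}\psi_{j}$ is non-negative and vanishes if and only if the gradiental vector $\psi_{i}=-\partial_{i}\mathcal{S}$ vanishes. Hence $\mathcal{S}(I|\theta)\le\mathcal{P}(\theta)$ everywhere, with equality exactly at the stationary points characterized by (\ref{global.stat}). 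This already shows that any point satisfying the stationary condition realizes the absolute maximum value $\mathcal{P}(\theta)$, so it suffices to prove that such a point exists and is unique.

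For uniqueness I would argue along a geodesic joining two putative stationary points. Parametrizing a geodesic $I_{g}(s)$ with unit affine parameter and recalling from (\ref{alter.post3}) that $d^{2}\mathcal{S}/ds^{2}=-1$ along it, the information dissipation $\Phi(s)=d\mathcal{S}/ds$ of (\ref{rate}) obeys $d\Phi/ds=-1$, so $\Phi(s)$ is a strictly decreasing affine function of $s$ and can vanish at most once. At any stationary point $\psi_{i}=0$, whence the directional derivative $d\mathcal{S}/ds=\dot{I}^{i}_{g}\partial_{i}\mathcal{S}=-\dot{I}^{i}_{g}\psi_{i}$ vanishes, i.e.\ $\Phi=0$ there. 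If two distinct stationary points existed, the geodesic connecting them would force $\Phi$ to vanish at two different parameter values, contradicting strict monotonicity; therefore the connecting geodesic has zero length and the two points coincide.

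Existence is where I expect the real work, and I would lean on \textbf{Axiom 4} together with the Remark following it. By that Remark the probability weight $\omega(I|\theta)=\exp[\mathcal{S}(I|\theta)]$ is strictly positive in the interior of $\mathcal{M}_{\theta}$ and tends to zero on the boundary $\partial\mathcal{M}_{\theta}$. A continuous, differentiable function that is positive inside and vanishes on the boundary must attain a positive maximum at some interior point $\bar{I}$, where $\partial_{i}\omega=0$ and hence $\psi_{i}(\bar{I})=-\partial_{i}\mathcal{S}(\bar{I})=0$; this furnishes the required stationary point, which by the first paragraph is automatically the global maximum.

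The hard part will be making the existence argument rigorous when $\mathcal{M}_{\theta}$ is unbounded, since then ``the boundary'' includes behaviour at infinity and one must ensure that the supremum of $\omega$ is genuinely attained rather than escaping to infinity; this is precisely what the vanishing boundary conditions (\ref{boundary.cond}) are designed to preclude. A secondary technical point is the uniqueness step's tacit assumption that any two points are joined by a geodesic. I would justify this from the strict geodesic concavity of $\mathcal{S}$ expressed by (\ref{alter.post3}) (which excludes conjugate points) together with the simply connected character of $\mathcal{M}_{\theta}$, making the exponential map a global diffeomorphism and guaranteeing geodesic connectedness.
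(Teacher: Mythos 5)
Your proof is correct and follows essentially the same route as the paper's: existence from the vanishing of the probability weight $\omega=\exp[\mathcal{S}(I|\theta)]$ on $\partial\mathcal{M}_{\theta}$ combined with its positivity and differentiability inside, and uniqueness from the strict monotonicity of the information dissipation $\Phi$ along a geodesic joining two putative stationary points, at which $\Phi$ would have to vanish twice. Your two refinements --- using the decomposition $\mathcal{S}=\mathcal{P}(\theta)-\frac{1}{2}\psi^{2}$ to show every stationary point attains the common global maximum value $\mathcal{P}(\theta)$ (where the paper instead invokes concavity), and explicitly flagging the geodesic-connectedness and escape-to-infinity issues that the paper leaves tacit --- are gains in rigor within the same argument, not a different approach.
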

\begin{proof}
The information potential $\mathcal{S}(I\theta)$ should exhibit at least a stationary point $\bar{I}$
where takes place the stationary condition (\ref{global.stat}). This conclusion follows from the vanishing of the scalar weight of distribution function:
\begin{equation}
\omega(I|\theta)=\exp\left[\mathcal{S}(I|\theta)\right]
\end{equation}
on the boundary $\partial\mathcal{M}_{\theta}$, as well as its character nonnegative, finite and differentiable on the simply connected manifold $\mathcal{M}_{\theta}$. Since the information potential $\mathcal{S}(I\theta)$ is a concave function, its stationary points can only correspond to local maxima. Let us suppose the existence of at least two stationary points $\bar{I}_{1}$ and $\bar{I}_{2}$ as well as the geodesic $I_{g}(t)$ that connects these points. According to constraint (\ref{alter.post3}), the information dissipation $\Phi(t)$ is a monotonous function along the curve $I_{g}(t)$. Therefore, $\Phi(t)$ should exhibit different values at the stationary points $\bar{I}_{1}$ and $\bar{I}_{2}$, which is absurdum since the information dissipation $\Phi(t)$ identically vanishes for any stationary point of the information potential $\mathcal{S}(I|\theta)$:
\begin{equation}
\Phi(t)=-\dot{I}^{i}(t)\psi_{i}[I(t)|\theta].
\end{equation}
Consequently, there exist only one stationary point that corresponds with the global maximum of the information potential
$\mathcal{S}(I|\theta)$.
\end{proof}
\begin{theorem}\label{entropy.distance}
Any hyper-surface of constant information potential $\mathcal{S}(I|\theta)$ is just the boundary of a $n$-dimensional sphere $S^{n}(\bar{I},\ell)\subset \mathcal{M}_{\theta}$ centered at the point $\bar{I}$ with global maximum information potential, where $n$ is the dimension of the manifold $\mathcal{M}_{\theta}$. Moreover, the information potential $\mathcal{S}$ depends on the radius $\ell$ of this $n$-dimensional sphere as follows:
\begin{equation}\label{E1}
\mathcal{S}=\mathcal{P}(\theta)-\frac{1}{2}\ell^{2}.
\end{equation}
\end{theorem}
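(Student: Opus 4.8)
The plan is to exploit the decomposition $\mathcal{S}(I|\theta)=\mathcal{P}(\theta)-\frac{1}{2}\psi^{2}(I|\theta)$ of the decomposition theorem together with the geodesic relation $d^{2}\mathcal{S}/ds^{2}=-1$ (equation \ref{alter.post3}), integrating along the radial geodesics that emanate from the unique global maximum $\bar{I}$ guaranteed by the previous theorem.

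First I would fix an arbitrary geodesic $I_{g}(s)\in\mathcal{M}_{\theta}$ with unitary affine parametrization, so that $s$ is arc length, and impose $I_{g}(0)=\bar{I}$. Two pieces of initial data are needed to integrate the second-order relation. The value at the origin, $\mathcal{S}[I_{g}(0)|\theta]=\mathcal{P}(\theta)$, follows from Corollary \ref{entropy.planck}, since $\bar{I}$ obeys the stationary condition $\psi^{2}(\bar{I}|\theta)=0$. The first derivative vanishes there: writing the information dissipation as $\Phi(s)=-\dot{I}_{g}^{i}(s)\psi_{i}[I_{g}(s)|\theta]$ and using $\psi_{i}(\bar{I}|\theta)=-D_{i}\mathcal{S}(\bar{I}|\theta)=0$ at the stationary point, one finds $d\mathcal{S}/ds|_{s=0}=\Phi(0)=0$.

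Next I would integrate $d^{2}\mathcal{S}/ds^{2}=-1$ subject to these conditions, which immediately yields $\mathcal{S}[I_{g}(s)|\theta]=\mathcal{P}(\theta)-\frac{1}{2}s^{2}$. Because $I_{g}(s)$ is a geodesic issuing from $\bar{I}$, its arc length coincides with the distance, $\mathfrak{D}_{\theta}(\bar{I},I_{g}(s))=s$, and setting $s=\ell$ reproduces equation \ref{E1}. Comparison with the decomposition equivalently gives $\psi^{2}(I|\theta)=\ell^{2}$, i.e. the square norm of the gradiental vector equals the squared geodesic distance to the maximum $\bar{I}$.

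Finally, since the resulting expression depends only on $\ell$ and not on the direction of the radial geodesic, every point sharing a given value of $\mathcal{S}(I|\theta)$ lies at the same geodesic distance $\ell$ from $\bar{I}$; by definition such a locus is the boundary of the geodesic ball of radius $\ell$ centered at $\bar{I}$, namely the sphere $S^{n}(\bar{I},\ell)$, which settles the first claim. The step I expect to be the main obstacle is the global one: justifying that the radial geodesics from $\bar{I}$ sweep out the whole manifold and remain length-minimizing, so that $\mathfrak{D}_{\theta}(\bar{I},\cdot)$ is single-valued on $\mathcal{M}_{\theta}$ and the level sets are genuine spheres rather than merely local ones. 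Here I would lean on the uniqueness of the stationary point established in the preceding maximum theorem, and on the global concavity of $\mathcal{S}(I|\theta)$ from Corollary \ref{concavity}, which together exclude focal or cut points of the radial field and ensure that the exponential map at $\bar{I}$ is a diffeomorphism onto the simply connected manifold $\mathcal{M}_{\theta}$.
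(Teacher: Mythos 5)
Your proof is correct and reaches the paper's level of rigor, but by a genuinely different route. The paper works with the normalized gradient field $\upsilon^{i}=\psi^{i}/\psi$: using $D_{i}\psi_{j}=g_{ij}$ (a consequence of Theorem 1) it verifies that $\upsilon^{i}$ obeys the geodesic equation, computes the information dissipation $\Phi=-\psi$ along its integral curves, and invokes Axiom 3 to identify $\psi(I)=\mathfrak{D}_{\theta}(I,\bar{I})$; the sphere statement and equation (\ref{E1}) then follow from the decomposition (\ref{Sdecomposition}). You instead integrate outward from the maximum: along an arbitrary unit-speed geodesic through $\bar{I}$ you solve $d^{2}\mathcal{S}/ds^{2}=-1$ with initial data $\mathcal{S}=\mathcal{P}(\theta)$ (Corollary \ref{entropy.planck}) and $d\mathcal{S}/ds=0$ (stationarity of $\bar{I}$), obtaining $\mathcal{S}=\mathcal{P}(\theta)-\frac{1}{2}s^{2}$ directly, with $\psi=\ell$ emerging as a byproduct rather than as the central step. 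Your route is more elementary, since it avoids the covariant differentiation of $\upsilon^{i}$; the paper's route buys the covering property almost for free, because through every point $I\neq\bar{I}$ there passes an integral curve of $\upsilon^{i}$ which, traced backwards ($\psi$ decreases at unit rate), terminates at $\bar{I}$ after parameter length $\psi(I)$, whereas you must argue separately that the geodesics issued from $\bar{I}$ sweep out all of $\mathcal{M}_{\theta}$. Both arguments share the same residual global gap, namely that radial arc length equals the distance $\mathfrak{D}_{\theta}$ (i.e. that these geodesics are minimizing): the paper simply asserts this, while you at least flag it explicitly. Your proposed fix via concavity and uniqueness of the maximum is heuristic as stated (concavity alone does not obviously exclude cut points), but it points in the right direction: a complete manifold carrying a smooth function whose covariant Hessian equals $-g_{ij}$ everywhere (here $\mathcal{S}$, by Theorem 1) is isometric to Euclidean space by Tashiro's theorem, which would validate both your step and the paper's.
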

\begin{proof}
By definition, the vector field $\upsilon^{i}(I|\theta)$:
\begin{equation}\label{unitary}
\upsilon^{i}(I|\theta)=\frac{\psi^{i}(I|\theta)}{\psi(I|\theta)}
\end{equation}
is the unitary normal vector of the hyper-surface with constant information potential $\mathcal{S}(I|\theta)$. It is easy to verify that the vector field $\upsilon^{i}(I|\theta)$ obeys the geodesic equations (\ref{geodesic}):
\begin{equation}
\upsilon^{k}(I|\theta)D_{k}\upsilon^{i}(I|\theta)=\frac{\upsilon^{k}
(I|\theta)}{\psi(I|\theta)}\left[\delta^{i}_{k}-\upsilon^{i}(I|\theta)\upsilon_{k}(I|\theta)\right]=0.
\end{equation}
Hence, $\upsilon^{i}(I|\theta)$ can be regarded as the tangent vector:
\begin{equation}\label{velocity}
\frac{dI^{i}_{g}(s|\mathbf{e})}{ds}=\upsilon^{i}[I_{g}(s|\mathbf{e})|\theta]
\end{equation}
of geodesic family $I_{g}(s|\mathbf{e})$ with unitary affine parametrization centered at the point $\bar{I}$ with maximum information potential $\mathcal{S}(I|\theta)$, $I_{g}(s=0|\mathbf{e})=\bar{I}$. Moreover, the constant unitary vector $\mathbf{e}$ parameterizes geodesics with different directions at the origin, $\dot{I}_{g}(s=0|\mathbf{e})=\mathbf{e}$ . The information dissipation $\Phi(s|\mathbf{e})$ along any of these geodesics is given by the negative of the norm of the gradiental vector:
\begin{equation}
\Phi(s|\mathbf{e})=-\frac{dI^{i}(s|\mathbf{e})}{ds}\psi_{i}\left[I_{g}(s|\mathbf{e})|\theta\right]=-\psi[I_{g}(s|\mathbf{e})|\theta].
\end{equation}
Considering equation (\ref{metric}), the norm $\psi(I|\theta)$ can be related to the length $\Delta s$ of the geodesic that connects an arbitrary point $I$ with the point $\bar{I}$ with maximum information potential, that is, the \textit{distance} $\mathfrak{D}_{\theta}(I,\bar{I})$ between the points $I$ and $\bar{I}$:
\begin{equation}\label{radius}
\psi(I|\theta)=\mathfrak{D}_{\theta}(I,\bar{I}).
\end{equation}
According to the gaussian decomposition (\ref{Sdecomposition}), the hyper-surface with constant information potential $\mathcal{S}(I|\theta)$ is also the hyper-surface where the norm of gradiental generalized forces $\psi(I|\theta)$ is kept constant, that is, the boundary of a $n$-dimensional sphere $S^{n}(\bar{I},\ell)$ centered at the point $\bar{I}$ with maximum information potential.
\end{proof}
\begin{corollary}
The distribution function (\ref{EinsteinPostulate}) can be expressed in the following \textbf{Riemannian gaussian representation}:
\begin{equation}\label{UGR}
dp(I|\theta)=\frac{1}{\mathcal{Z}(\theta)}\exp\left[-\frac{1}{2}\ell^{2}(I)\right]d\mu(I|\theta),
\end{equation}
where $\ell(I)=\mathfrak{D}_{\theta}(I,\bar{I})$ is the separation distance between the points $\bar{I}$ and $I$. Consequently, the knowledge of the metric tensor $g_{ij}(I|\theta)$ and the point $\bar{I}$ with maximum information potential $\mathcal{S}(I|\theta)$ fully determines the distribution function $dp(I|\theta)$.
\end{corollary}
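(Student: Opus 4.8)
The plan is to obtain the Riemannian gaussian representation (\ref{UGR}) as a direct consequence of the gaussian representation (\ref{universal}) combined with the geometric identification of the gradiental norm established in Theorem \ref{entropy.distance}. First I would recall the gaussian representation (\ref{universal}), in which the distribution function reads $dp(I|\theta)=\mathcal{Z}^{-1}(\theta)\exp[-\frac{1}{2}\psi^{2}(I|\theta)]\,d\mu(I|\theta)$. The whole content of the corollary is then to re-express the scalar $\psi^{2}(I|\theta)$ in a manifestly geometric way.

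Second, I would invoke equation (\ref{radius}), namely $\psi(I|\theta)=\mathfrak{D}_{\theta}(I,\bar{I})$, which identifies the norm of the gradiental vector at $I$ with the geodesic distance separating $I$ from the unique point $\bar{I}$ of maximum information potential. Setting $\ell(I)=\mathfrak{D}_{\theta}(I,\bar{I})$ and substituting $\psi^{2}(I|\theta)=\ell^{2}(I)$ into (\ref{universal}) yields exactly (\ref{UGR}). The legitimacy of this substitution at every $I\in\mathcal{M}_{\theta}$ rests on the fact, established inside the proof of Theorem \ref{entropy.distance}, that the integral curves of the unitary field $\upsilon^{i}=\psi^{i}/\psi$ are geodesics emanating radially from $\bar{I}$, so that each point $I$ is reached by a geodesic whose affine length coincides with $\psi(I|\theta)$; the uniqueness of the stationary point then guarantees that $\ell(I)$ is single-valued on the whole manifold.

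Finally, for the determination claim I would argue that every ingredient of (\ref{UGR}) is fixed once $g_{ij}(I|\theta)$ and $\bar{I}$ are prescribed. The metric determines the Levi-Civita connection (\ref{Levi-Civita}) and hence, through the geodesic equations (\ref{geodesic}), the radial geodesics issuing from $\bar{I}$; integrating the affine length along these curves supplies $\ell(I)=\mathfrak{D}_{\theta}(I,\bar{I})$ everywhere. The same metric fixes the invariant volume element $d\mu(I|\theta)$ via (\ref{inv.volume}). The sole remaining scalar, the gaussian partition function $\mathcal{Z}(\theta)$, is then pinned down by the normalization $\int_{\mathcal{M}_{\theta}}dp(I|\theta)=1$, in agreement with the relation $\mathcal{P}(\theta)=-\log\mathcal{Z}(\theta)$. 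Hence $dp(I|\theta)$ is completely specified by $g_{ij}$ and $\bar{I}$.

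The step I expect to be the main obstacle is not the algebraic substitution but the global well-definedness of $\ell(I)$: one must ensure that the radial geodesic flow based at $\bar{I}$ covers $\mathcal{M}_{\theta}$ without conjugate points or self-intersections, so that $\mathfrak{D}_{\theta}(I,\bar{I})$ is a smooth single-valued function realized by a genuine minimizing geodesic. This relies on the simple connectedness of $\mathcal{M}_{\theta}$, the uniqueness of the maximum of $\mathcal{S}$, and the monotonicity of the information dissipation along geodesics, all of which have been secured in the preceding theorems; a fully rigorous treatment, however, would still require controlling possible caustics of the geodesic exponential map.
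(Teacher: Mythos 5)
Your proposal is correct and follows essentially the same route as the paper: the paper substitutes equation (\ref{E1}) (which already combines the decomposition (\ref{Sdecomposition}) with the identification $\psi(I|\theta)=\mathfrak{D}_{\theta}(I,\bar{I})$ from Theorem \ref{entropy.distance}) directly into (\ref{EinsteinPostulate}), while you substitute (\ref{radius}) into (\ref{universal}) --- the same ingredients composed in a different order --- and both arguments conclude, as the paper does, that $\ell(I)$, $d\mu(I|\theta)$ and hence $dp(I|\theta)$ are fixed by $g_{ij}(I|\theta)$ and $\bar{I}$. Your closing caveat about caustics and the global single-valuedness of $\ell(I)$ is a legitimate point of rigor that the paper itself leaves implicit in the proof of Theorem \ref{entropy.distance}, but it does not mark a difference of method.
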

\begin{proof}
Riemannian gaussian representation (\ref{UGR}) is a direct consequence of replacing equation (\ref{E1}) into equation (\ref{EinsteinPostulate}). The radius $\ell$ of the $n$-dimensional sphere $S^{n}(\bar{I},\ell)$ referred to in \textbf{Theorem \ref{entropy.distance}} and the invariant volume element $d\mu(I|\theta)$ are purely \emph{geometric notions} derived from the knowledge of the metric tensor $g_{ij}(I|\theta)$ and the point $\bar{I}$ with maximum information potential $\mathcal{S}(I|\theta)$. Equation (\ref{UGR}) evidences that all the statistical description associated with the distribution function (\ref{DF}) can be rephrased in terms of geometric notions derived from the Riemannian structure of the manifold $\mathcal{M}_{\theta}$.
\end{proof}
\begin{corollary}
For points $I$ close to the point $\bar{I}$ with maximum information potential $\mathcal{S}(I|\theta)$, the distribution function (\ref{EinsteinPostulate}) admits the following gaussian approximation:
\begin{equation}\label{gaussian.fluct}
dp(I|\theta)\simeq\exp\left[-\frac{1}{2}g_{ij}(\bar{I}|\theta)\Delta I^{i}\Delta I^{i}\right]\sqrt{\left|\frac{g_{ij}(\bar{I}|\theta)}{2\pi}\right|}dI.
\end{equation}
\end{corollary}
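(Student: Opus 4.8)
The plan is to take the exact gaussian representation (\ref{universal}), $dp(I|\theta)=\mathcal{Z}^{-1}(\theta)\exp[-\frac{1}{2}\psi^{2}(I|\theta)]\,d\mu(I|\theta)$, and to replace each of its three ingredients---the exponent $\psi^{2}(I|\theta)$, the invariant volume element $d\mu(I|\theta)$, and the normalizing factor $\mathcal{Z}(\theta)$---by its leading approximation for $I$ close to the global maximum $\bar{I}$. The whole computation reduces to evaluating the information potential $\mathcal{S}$ and its first two partial derivatives at $\bar{I}$, all of which follow from results already established above.

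First I would pin down the quadratic expansion of the exponent. By the gaussian decomposition (\ref{Sdecomposition}) one has $\frac{1}{2}\psi^{2}(I|\theta)=\mathcal{P}(\theta)-\mathcal{S}(I|\theta)$, so it suffices to Taylor-expand $\mathcal{S}$ about $\bar{I}$. Corollary \ref{entropy.planck} gives the zeroth-order value $\mathcal{S}(\bar{I}|\theta)=\mathcal{P}(\theta)$; the stationarity condition (\ref{global.stat}) together with the positive definiteness of the metric forces $\psi_{i}(\bar{I}|\theta)=-\partial_{i}\mathcal{S}(\bar{I}|\theta)=0$, so the linear term vanishes; and for the quadratic term I would combine the covariant-Hessian identification (\ref{cov.EH}) with its explicit form (\ref{Hessian}). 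Since $\partial_{k}\mathcal{S}(\bar{I}|\theta)=0$, the Christoffel correction $\Gamma^{k}_{ij}\partial_{k}\mathcal{S}$ drops out at $\bar{I}$ and the covariant Hessian collapses to the ordinary one, yielding $\partial_{i}\partial_{j}\mathcal{S}(\bar{I}|\theta)=-g_{ij}(\bar{I}|\theta)$. This produces $\psi^{2}(I|\theta)=g_{ij}(\bar{I}|\theta)\Delta I^{i}\Delta I^{j}+O(|\Delta I|^{3})$. The volume element is then handled by mere continuity of the metric, $d\mu(I|\theta)=\sqrt{|g_{ij}(I|\theta)/2\pi|}\,dI\simeq\sqrt{|g_{ij}(\bar{I}|\theta)/2\pi|}\,dI$ to the same order.

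The remaining---and only genuinely delicate---step concerns the normalizing factor $\mathcal{Z}(\theta)=\exp[-\mathcal{P}(\theta)]$ of (\ref{gaussianPlanck}), which is absent from the target expression (\ref{gaussian.fluct}). The hard part will be justifying that it reduces to unity in this approximation. Here I would argue by self-consistency: the quadratic form $g_{ij}(\bar{I}|\theta)$ is positive definite (Corollary \ref{concavity}), so inserting the quadratic exponent and the frozen metric into $\mathcal{Z}(\theta)=\int_{\mathcal{M}_{\theta}}\exp[-\frac{1}{2}\psi^{2}]\,d\mu$ and extending the integration to $\mathbb{R}^{n}$---legitimate because the integrand is exponentially concentrated near $\bar{I}$---gives the elementary gaussian integral $\int_{\mathbb{R}^{n}}\exp[-\frac{1}{2}g_{ij}(\bar{I}|\theta)\Delta I^{i}\Delta I^{j}]\sqrt{|g_{ij}(\bar{I}|\theta)/2\pi|}\,dI=1$. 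Thus $\mathcal{Z}(\theta)\simeq 1$, i.e. $\mathcal{P}(\theta)$ collects only the non-gaussian corrections and is negligible at leading order. Substituting the three approximations into (\ref{universal}) then reproduces exactly the stated expression (\ref{gaussian.fluct}), completing the argument.
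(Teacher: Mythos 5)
Your proposal is correct, and its overall architecture coincides with the paper's proof: both start from the exact gaussian representation, replace the exponent by a quadratic form in $\Delta I$, freeze the metric in the volume element at $\bar{I}$, and invoke the normalization condition to set $\mathcal{Z}(\theta)\simeq 1$. Where you genuinely differ is in the justification of the central step, the quadratic expansion of the exponent. The paper works with the Riemannian gaussian representation (\ref{UGR}) and obtains $\ell^{2}(I)\simeq g_{ij}(\bar{I}|\theta)\Delta I^{i}\Delta I^{j}$ directly from the arc-length definition (\ref{distance}), i.e.\ it uses the standard geometric fact that the geodesic distance between nearby points is given, to leading order, by the quadratic form of the metric frozen at one endpoint. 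You instead work with the form (\ref{universal}) and recover the same quadratic form by Taylor-expanding the information potential $\mathcal{S}$ about $\bar{I}$: the zeroth-order value from Corollary \ref{entropy.planck}, the vanishing of the linear term from the stationarity condition (\ref{global.stat}) together with positive definiteness of the metric, and the second-order coefficient from the covariant Hessian identification (\ref{cov.EH}), observing that the Christoffel correction in (\ref{Hessian}) drops out at the stationary point so that the covariant Hessian collapses to the ordinary one. The two routes are equivalent, since $\psi(I|\theta)=\mathfrak{D}_{\theta}(I,\bar{I})$ by (\ref{radius}), but yours is more self-contained: it relies only on results already proved in the paper rather than on an external fact of Riemannian geometry, and it makes explicit why the ordinary Hessian of $\mathcal{S}$ at $\bar{I}$ equals $-g_{ij}(\bar{I}|\theta)$. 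Your treatment of $\mathcal{Z}(\theta)\simeq 1$ is likewise more explicit than the paper's one-line appeal to normalization, though both rest on the same tacit Laplace-type argument that the integrand concentrates near $\bar{I}$, which is the appropriate level of rigor for an approximation statement of this kind.
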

\begin{proof}
The separation distance $\ell(I)=\mathfrak{D}_{\theta}(I,\bar{I})$ can be approximated as follows:
\begin{equation}
\ell^{2}(I)\simeq g_{ij}(\bar{I}|\theta)\Delta I^{i}\Delta I^{i}.
\end{equation}
This last expression can be directly obtained from definition (\ref{distance}), where $\Delta I^{i}=I^{i}-\bar{I}^{i}$. In this approximation level, the normalization condition implies the following estimation for gaussian partition function $\mathcal{Z}(\theta)\simeq1$.
\end{proof}

\section{Application examples}\label{Examples}

\subsection{Fluctuation geometry of an one-dimensional statistical manifold $\mathcal{M}_{\theta}$}

Let $dp(I|\theta)$ be a generic parametric family defined on an one-dimensional manifold $\mathcal{M}_{\theta}$. Let us also consider that the admissible values of the stochastic variable in the coordinate representation $\mathcal{R}_{I}$ belong to a certain real subset $(I_{min},I_{max})\subset \mathbb{R}$. Due to its general multidimensional character, the statistical manifold $\mathcal{P}$ of control parameters $\theta$ could be a flat or a curved Riemannian manifold. A particular example with a great relevance in statistical and physical applications is the \emph{exponential family}\footnote{Classical statistical ensembles as canonical and Gran canonical ensembles belong to the exponential family (\ref{Exponential}).}:
\begin{equation}\label{Exponential}
dp(I|\theta)=\exp\left[P(\theta)-\theta^{\alpha}A_{\alpha}(I)+B(I)\right]dI.
\end{equation}
According to \emph{Amari's $\sigma$-connections} \cite{Amari}:
\begin{equation}\label{Amari.Connection}
\Gamma^{(\sigma)}_{\alpha\beta\gamma}(\theta)=\left\langle\left(\frac{\partial\upsilon_{\beta}(\mathcal{I}|\theta)}{\partial\theta^{\alpha}}+
\frac{1-\sigma}{2}\upsilon_{\alpha}(\mathcal{I}|\theta)\upsilon_{\beta}(\mathcal{I}|\theta)\right)\upsilon_{\gamma}(\mathcal{I}|\theta)
\right\rangle.
\end{equation}
the statistical manifold $\mathcal{P}$ associated with the exponential family (\ref{Exponential}) is trivially flat when the connection parameter $\sigma=\pm1$. However, the statistical manifold $\mathcal{P}$ could be a curved manifold for other values of connection parameter $\sigma$. An special case is $\sigma=0$, which corresponds to the Levi-Civita connection (\ref{Levi-Civita}) associated with the inference metric tensor $g_{\alpha\beta}(\theta)\equiv-\partial^{2}P(\theta)/\partial\theta^{\alpha}\partial\theta^{\beta}$. Without mattering about the geometry of the statistical manifold $\mathcal{P}$, the one-dimensional statistical manifold $\mathcal{M}_{\theta}$ is always \emph{diffeomorphic} to the one-dimensional Euclidean manifold $\mathbb{E}$. Clearly, the curvature notion is only admissible for manifolds with dimension $n\geq 2$, and hence, the one-dimensional manifold $\mathcal{M}_{\theta}$ must exhibit a \emph{flat geometry}. As expected, this type of distribution functions represents the simplest application framework of fluctuation geometry.

The invariant volume element (\ref{inv.volume}) of the one-dimensional manifold $\mathcal{M}_{\theta}$ can be rewritten in term of the statistical distance (\ref{fluct.dist}) as follows:
\begin{equation}\label{dm1}
d\mu(I|\theta)=\sqrt{g_{11}(I|\theta)/2\pi}dI\equiv ds/\sqrt{2\pi},
\end{equation}
where $g_{11}(I|\theta)$ denotes the only component of the metric tensor. One can apply the Riemannian gaussian representation (\ref{UGR}) instead of performing the integration of the set of covariant partial differential equations (\ref{cov.equation}). For convenience, let us firstly introduce the coordinate reparametrization $s(I|\theta):\mathcal{R}_{I}\rightarrow \mathcal{R}_{s}$ defined by the distance $\mathfrak{D}_{\theta}(I|\bar{I})\equiv \ell(I)$ as follows:
\begin{equation}\label{repar}
s(I|\theta)=\left\{
                     \begin{array}{cc}
                       -\mathfrak{D}_{\theta}(I|\bar{I}) & \mbox{for }I<\bar{I}, \\
                       \mathfrak{D}_{\theta}(I|\bar{I}) & \mbox{for }I\geq\bar{I}, \\
                     \end{array}
\right.
\end{equation}
where the metric tensor component $g_{11}(s|\theta)\equiv 1$. Using equations (\ref{dm1}) and (\ref{repar}), Riemannian gaussian representation (\ref{UGR}) can be expressed in the coordinate representation $\mathcal{R}_{s}$ as:
\begin{equation}\label{GD.example}
dp(I|\theta)=\rho(I|\theta)dI\equiv\frac{1}{\mathcal{Z}(\theta)}e^{-\frac{1}{2}s^{2}}\frac{ds}{\sqrt{2\pi}}.
\end{equation}
As discussed in \ref{Deriv.FG1D}, the previous expression allows a straightforwardly derivation of the reparametrization function $s(I|\theta)$. Introducing the \emph{cumulant distribution function} $p(I|\theta)$:
\begin{equation}\label{cumulant}
p(I|\theta)=\int^{I}_{I_{min}}dp(I'|\theta)dI',
\end{equation}
the reparametrization function $s(I|\theta)$ is given by:
\begin{equation}\label{map}
s(I|\theta)=\Phi^{-1}\left[p(I|\theta)\right].
\end{equation}
Here, $\Phi^{-1}(z)$ is the inverse of the function $\Phi(z)$:
\begin{equation}\label{Psi}
\Phi(z)=\frac{1}{\sqrt{2\pi}}\int^{z}_{-\infty}e^{-\frac{1}{2}s^{2}}ds\equiv \frac{1}{2}\left(1+\mathrm{erf}(z/\sqrt{2})\right),
\end{equation}
with $\mathrm{erf(z)}$ being the \emph{error function}:
\begin{equation}
\mathrm{erf(z)}=\sqrt{\frac{2}{\pi}}\int^{z}_{0}e^{-x^{2}}dx.
\end{equation}

\begin{figure}
  \includegraphics[width=4.0in]{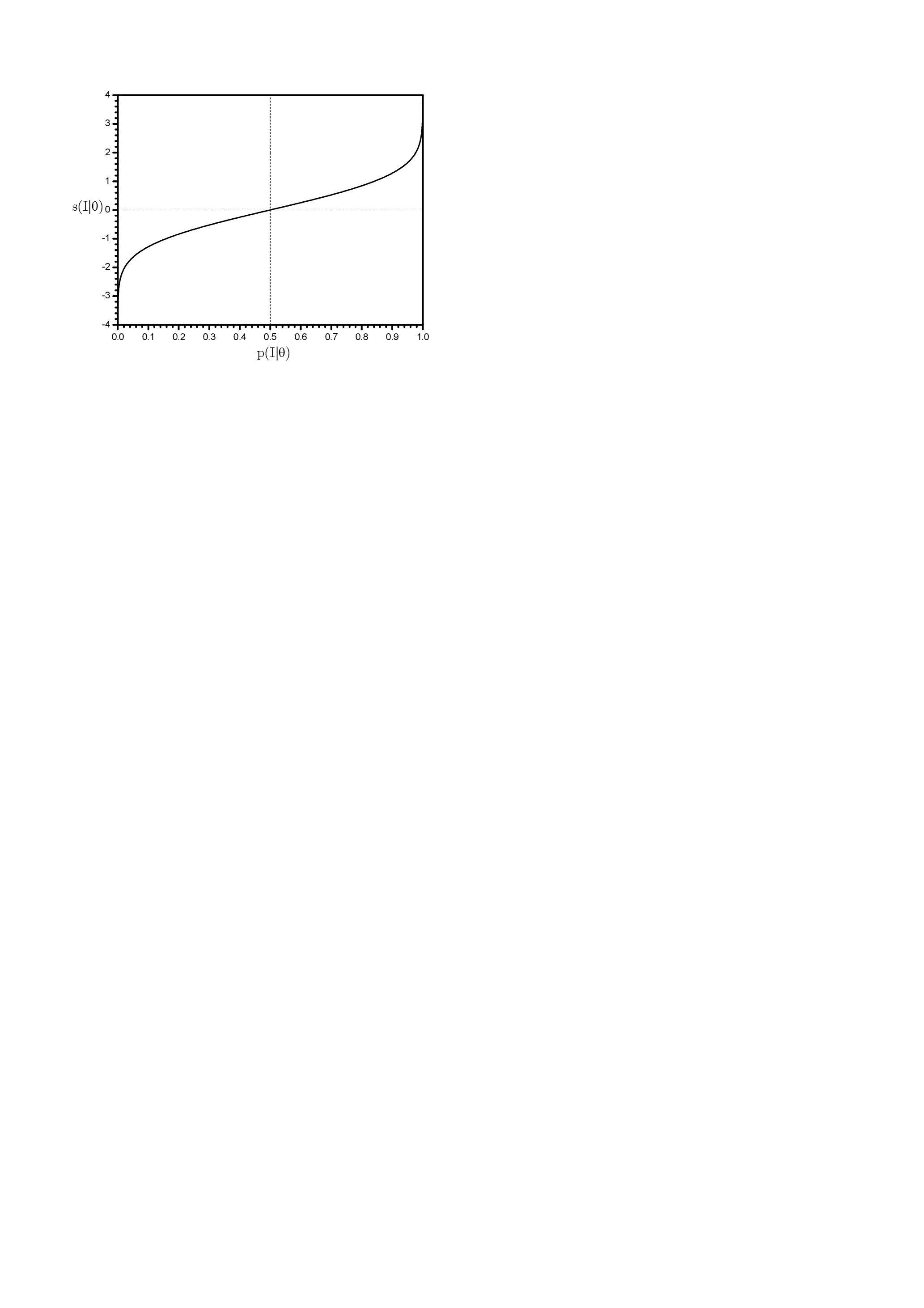}\\
  \caption{Dependence of the reparametrization function $s(I|\theta)$ \emph{versus} the cumulant distribution function $p(I|\theta)$. Notice that the value corresponding to the maximum information potential $s(\bar{I}|\theta)=0$ takes place when the cumulant function $p(\bar{I}|\theta)=1/2$.}\label{PsiFunction.eps}
\end{figure}

The dependence between the reparametrization function $s(I|\theta)$ and the cumulant distribution function $p(I|\theta)$ is illustrated in figure \ref{PsiFunction.eps}. By definition of the reparametrization function (\ref{repar}), the point $\bar{I}$ with maximum information potential $\mathcal{S}(I|\theta)$ corresponds to the condition $s(\bar{I}|\theta)=0$. According to equation (\ref{map}), the information potential $\mathcal{S}(I|\theta)$ exhibits its maximum value at the point $\bar{I}$ where the cumulant distribution function (\ref{cumulant}) reaches the value $p(\bar{I}|\theta)=1/2$. Moreover, the admissible values of the variable $s$ belong to the entire real space $\mathbb{R}$, $-\infty\leq s \leq +\infty$. The normalization of the gaussian distribution (\ref{GD.example}) implies that the gaussian partition function $\mathcal{Z}(\theta)\equiv 1$. Thus, the information potential $\mathcal{S}(I|\theta)$ is given by:
\begin{equation}\label{IP}
\mathcal{S}(I|\theta)\equiv-s^{2}(I|\theta)/2,
\end{equation}
which is a non positive function that diverges at the boundary $\partial \mathcal{M}_{\theta}$ of the statistical manifold $\mathcal{M}_{\theta}$ (at the boundary points $I_{min}$ and $I_{max}$ in the coordinate representation $\mathcal{R}_{I}$.). The only component of the metric tensor $g_{ii}(I|\theta)$ can be expressed as follows:
\begin{equation}\label{metric.example}
g_{11}(I|\theta)=2\pi\rho^{2}(I|\theta)\exp\left[s^{2}(I|\theta)\right].
\end{equation}
\begin{figure}
  \includegraphics[width=4.0in]{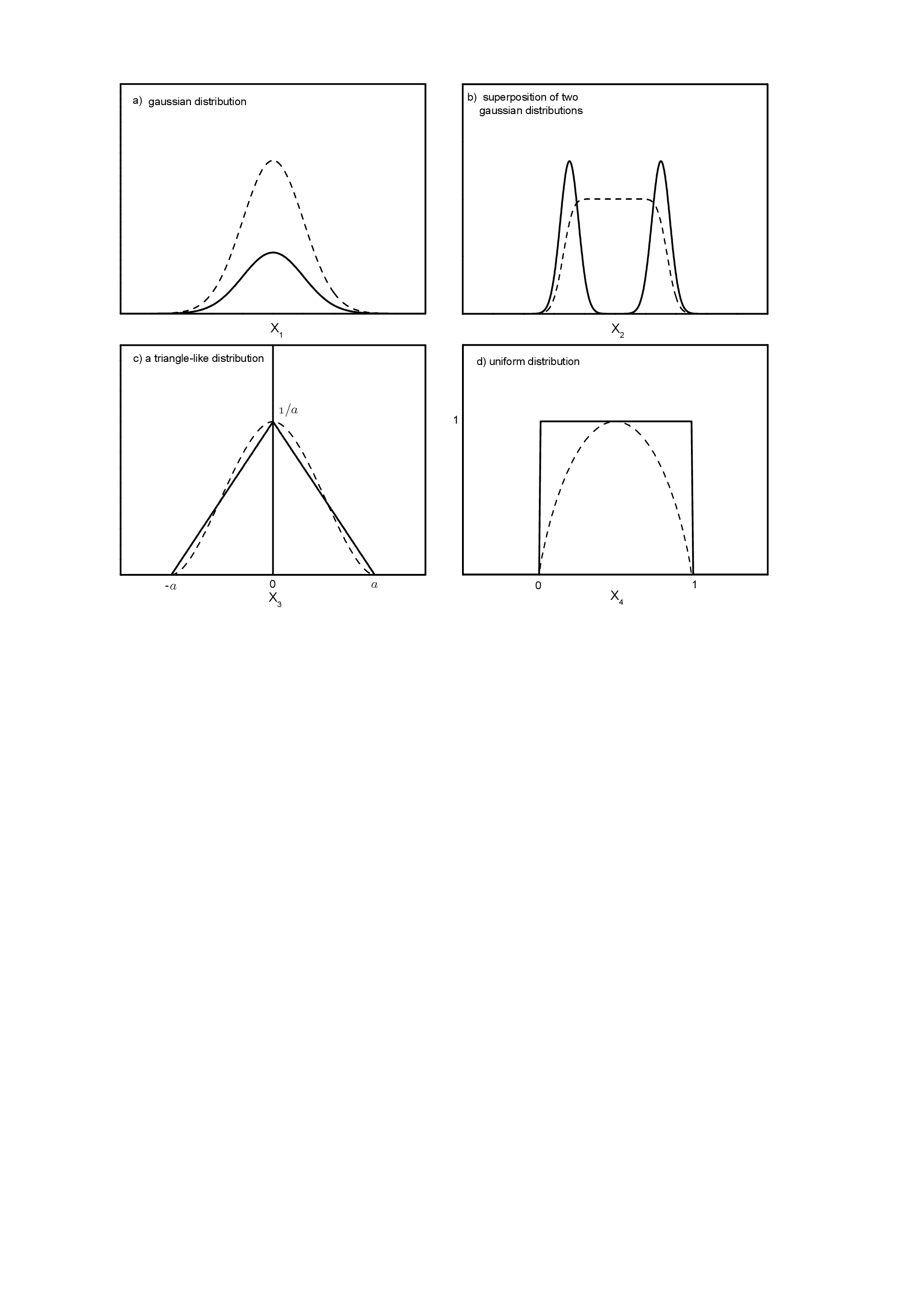}\\
  \caption{Comparison between probability density function $\rho(I|\theta)$ (solid line) and probability weight $\omega(I|\theta)$ (dashed line) for some simple distribution functions: Panel a) A gaussian distribution defined on the one-dimensional real space $\mathbb{R}$. Panel b) The superposition of two different gaussian distributions defined on the one-dimensional real space $\mathbb{R}$. Panel c) a triangle-like distribution defined on the real segment $[-a,a]$. Panel d) A uniform distribution defined on the real segment $[0,1]$. Despite their different appearance, all these distributions are \emph{differmorphic}, that is, they are equivalent from the viewpoint of fluctuation geometry.}\label{equivalence.eps}
\end{figure}
For illustrative purposes, it is shown in figure \ref{equivalence.eps} a comparison between the probability density function $\rho(I|\theta)$ and the probability weight $\omega(I|\theta)=\rho(I|\theta)\sqrt{\left|2\pi g^{11}(I|\theta)\right|}\equiv\exp\left[\mathcal{S}(I|\theta)\right]$ for some simple distribution functions $dp(I|\theta)$. While the probability density $\rho(I|\theta)$ can be a \emph{multimodal function} in certain coordinate representations $\mathcal{R}_{I}$ (as the case shown in panel \ref{equivalence.eps}.b), the probability weight $\omega(I|\theta)$ is always a \emph{monomodal scalar function} as a consequence of \textbf{Theorem 3}.

Summarizing, the present analysis demonstrates that any parametric family $dp(I|\theta)$ defined on an one-dimensional statistical manifold $\mathcal{M}_{\theta}$ can always be mapped onto the gaussian distribution function (\ref{GD.example}) using the reparametrization function (\ref{map}). Consequently, all these distribution functions (as the ones shown in figure \ref{equivalence.eps}) are \emph{diffeomorphic} among them. In other words, all them can be regarded as an \emph{abstract gaussian distribution function} defined on the Euclidean manifold $\mathbb{E}$, but expressed in different coordinate representations $\mathcal{R}_{I}$\footnote{The concept of \emph{diffeomorphic distribution functions} will be considered in subsection \ref{information} to discuss the notion of \emph{intrinsic differential entropy} of a statistical manifold $\mathcal{M}_{\theta}$.}. On the other hand, the relationship between the reparametrization function $s(I|\theta)$ and the cumulant distribution function $p(I|\theta)$ evidences the \emph{purely statistical significance} of the distance notion (\ref{fluct.dist}). As expected, fluctuation geometry establishes a direct correspondence between the geometrical description of the statistical manifold $\mathcal{M}_{\theta}$ and its probabilistic description. For example, the following geometrical and probabilistic inequalities:
\begin{equation}
\mathfrak{D}_{\theta}(I|\bar{I})<\varepsilon\mbox{ and }\left|p(I|\theta)-1/2\right|\leq \frac{1}{2}\mathrm{erf}(\varepsilon/\sqrt{2})
\end{equation}
are fully equivalent.

\subsection{Generalization to the $n$-dimensional Euclidean manifold $\mathbb{E}^{n}$}

Let us suppose that the parametric family $dp(I|\theta)$ can be factorized into independent distribution functions $dp^{(i)}(I^{i}|\theta)$ for each stochastic variable $I^{i}$:
\begin{equation}\label{decomposition}
dp(I|\theta)=\prod_{i}dp^{(i)}(I^{i}|\theta).
\end{equation}
Accordingly, its associated $n$-dimensional statistical manifold $\mathcal{M}_{\theta}$ can be decomposed as the \emph{external product} of the set of one-dimensional statistical manifolds $\left\{\mathcal{E}^{i}_{\theta}\right\}$ as follows:
\begin{equation}
\mathcal{M}_{\theta}=\mathcal{E}^{1}_{\theta}\otimes \mathcal{E}^{2}_{\theta}\ldots \otimes\mathcal{E}^{n}_{\theta},
\end{equation}
and hence, $\mathcal{M}_{\theta}$ is diffeomorphic to the $n$-dimensional Euclidean manifold $\mathbb{E}^{n}$. The results obtained in the previous subsection are straightforwardly extended to the present situation considering that the information potential $\mathcal{S}(I|\theta)$ is additive:
\begin{equation}
\mathcal{S}(I|\theta)=\sum^{n}_{i=1}\mathcal{S}^{(i)}(I^{i}|\theta),
\end{equation}
while the metric tensor $g_{ij}(I|\theta)$ is diagonal:
\begin{equation}
ds^{2}=\sum^{n}_{i=1}g_{ii}(I^{i}|\theta)(dI^{i})^{2}.
\end{equation}
Here, the functions $\mathcal{S}^{(i)}(I^{i}|\theta)$ and $g_{ii}(I^{i}|\theta)$ are obtained from the probability densities $\rho^{(i)}(I^{i}|\theta)$ using equations (\ref{cumulant}), (\ref{map}), (\ref{IP}) and (\ref{metric.example}).

\section{Implications of fluctuation geometry in statistics and physics}\label{Relevance}

\subsection{Comparison between fluctuation geometry and inference geometry}

As naturally expected, the distance notion of inference geometry (\ref{inf.dist}) allows to define a \emph{statistical distance $\mathfrak{D}(\vartheta|\theta)$ between two members} (two different distribution functions) of the parametric family (\ref{DF}), e.g.: considering the arc-length of the geodesics that connects the points $\theta$ and $\vartheta\in\mathcal{P}$. According to asymptotic formula (\ref{asympt.inf}), this statistical distance is associated with the \emph{distinguishing probability} of these distribution functions during a statistical inferential procedure. Conversely, the distance notion of fluctuation geometry (\ref{fluct.dist}) allows to define a \emph{statistical distance $\mathfrak{D}_{\theta}(I_{1}|I_{2})$ between two sets of values of the stochastic variables $I$, which are described by a given member} (a specific distribution function) \emph{of the parametric family} (\ref{DF}). At first glance, the approximation formula (\ref{gaussian.fluct}) can be regarded as a \emph{counterpart expression} of the asymptotic distribution (\ref{asympt.inf}) of inference geometry. Such an analogy clarifies the \emph{relevance} of the distance notion (\ref{fluct.dist}): this second statistical distance is associated with the \emph{occurrence probability} of a small fluctuation $\Delta I=I-\bar{I}$ around the point $\bar{I}$ with maximum information potential $\mathcal{S}(I|\theta)$. Remarkably, the asymptotic formula (\ref{gaussian.fluct}) is the crudest approximation of the Riemannian gaussian representation (\ref{UGR}). According to this rigorous result of fluctuation geometry, the statistical distance $\mathfrak{D}_{\theta}(I|\bar{I})$ is simply a measure of the \emph{relative occurrence probability} in regard to the point with maximum information potential $\bar{I}$:
\begin{equation}
\mathfrak{D}^{2}_{\theta}(I|\bar{I})\equiv-2\log\left[\frac{\omega(I|\theta)}{\omega(\bar{I}|\theta)}\right],
\end{equation}
with $\omega(I|\theta)$ being the probability weight (\ref{scalar.weight}).

Although inference geometry and fluctuation geometry are two counterpart approaches, they provide \emph{different qualitative information} about the statistical properties of a given parametric family (\ref{DF}). On one hand, the first theory provides geometric information concerning to the inference of the control parameters $\theta$ of a given parametric family (\ref{DF}). On the other hand, the second theory provides geometric information about the fluctuating behavior of the stochastic variables $I$ for a given distribution function of the parametric family (\ref{DF}). Noteworthy that the term \emph{geometric information} has a special meaning here: these geometric theories consider those properties of the statistical manifolds $\mathcal{P}$ and $\mathcal{M}_{\theta}$ that are independent on their specific coordinate representations $\mathcal{R}_{\theta}$ and $\mathcal{R}_{I}$. Additionally, these two statistical geometries differ in regard to their \emph{application frameworks}. Inference geometry only demands the \emph{continuous character} of the statistical manifold $\mathcal{P}$ of control parameters $\theta$. Therefore, this type of geometry can be introduced for a \emph{parametric family of distribution functions} $p(X|\theta)$ \emph{defined on a set of discrete variables} $X=\left\{X_{k}\right\}$:
\begin{equation}\label{discrete}
p(X|\theta)=\left\{p(X_{k}|\theta)|k\in\mathbb{Z}\right\}.
\end{equation}
Conversely, fluctuation geometry only demands the \emph{continuous character} of the manifold $\mathcal{M}_{\theta}$ of stochastic variables $I$. Therefore, this geometry can be introduced for a \emph{continuous distribution function without control parameters}:
\begin{equation}
dp(I)=\rho(I)dI.
\end{equation}
As expected, the simultaneous definition of inference geometry and fluctuation geometry is only possible for parametric families (\ref{DF}) defined on continuous statistical manifolds $\mathcal{M}_{\theta}$ and $\mathcal{P}$.

A simple look to equations (\ref{Fisher}) and (\ref{cov.equation}) allows us to realize that these geometric theories have a different \emph{amenable character}. In particular, the metric tensor $g_{\alpha\beta}(\theta)$ of inference geometry (\ref{Fisher}) is very easy to obtain, either from the analytical or numerical calculation of these integrals. Conversely, the metric tensor $g_{ij}(I|\theta)$ of fluctuation geometry should be obtained solving a set of covariant partial differential equations (\ref{cov.equation}), whose admissible solutions must obey certain boundary conditions. Actually, this latter mathematical procedure can be a hard task for a manifold $\mathcal{M}_{\theta}$ with a nontrivial geometry. However, once obtained the metric tensors $g_{\alpha\beta}(\theta)$ and $g_{ij}(I|\theta)$, the amenable character of these geometric theories changes in a radical way: \emph{fluctuation geometry turns much more amenable than inference geometry}. For example, a modest mathematical effort has been devoted to arrive at the rigorous \textbf{Theorems 2-4} and their associated implications as the Riemannian gaussian representation (\ref{UGR}). Conversely, inference geometry has to deal with \emph{a very serious difficulty}: there not exist a general way to relate a given parametric family (\ref{DF}) and the \emph{unbiased estimators} $\hat{\theta}$ of its control parameters $\theta$. Even the calculation of the distribution function (\ref{asymp.dist}) of the \emph{efficient unbiased estimators} $\hat{\theta}_{eff}$ is not a easy task. In particular, the asymptotic distribution (\ref{asympt.inf}) is a direct application of the \emph{central limit theorem}, that is, an approximation formula for a statistical inference with a large but finite number $m$ of outcomes $\mathcal{I}=\left\{I^{(1)},I^{(2)},...I^{(m)}\right\} $. A very important question during the historical development of inference geometry is the so-called \emph{higher-order asymptotic theory of statistical estimation} \cite{Amari}, where a fundamental task is the improvement of approximation formula (\ref{asympt.inf}) considering a $1/m$-power expansion based on the intrinsic geometry of the manifold $\mathcal{P}$. A counterpart of Riemannian gaussian representation (\ref{UGR}) in the framework of inference geometry is unknown in the literature, at least, from the knowledge of the present author.

\subsection{On the notion of information entropy for a continuous distribution}\label{information}

As discussed elsewhere \cite{Thomas}, the \emph{information entropy} $S\left[p|\theta\right]$ associated with the discrete distribution function (\ref{discrete}) is written as follows:
\begin{equation}\label{statistical.entropy}
S\left[p|\theta\right]=-\sum_{k}p(X_{k}|\theta)\log p(X_{k}|\theta).
\end{equation}
Conceptually, information entropy is considered as a measure of the \emph{unpredictability} associated with a random variable $X$. Interestingly, its counterpart extension for a continuous distribution function:
\begin{equation}\label{arb.DF}
dQ(I)=q(I)dI,
\end{equation}
the so-called \emph{differential entropy}:
\begin{equation}\label{differential.entropy}
S^{(\mathcal{R}_{I})}_{de}\left[Q|\mathcal{M}_{\theta}\right]=-\int_{\mathcal{M}_{\theta}}q(I)\log q(I) dI,
\end{equation}
undergoes an important geometric inconsistence: its definition crucially depends on the coordinate representation $\mathcal{R}_{I}$ of the statistical manifold $\mathcal{M}_{\theta}$:
\begin{equation}
S^{(\mathcal{R}_{I})}_{de}\left[Q|\mathcal{M}_{\theta}\right]\neq S^{(\mathcal{R}_{\Theta})}_{de}\left[Q|\mathcal{M}_{\theta}\right]=-\int_{\mathcal{M}_{\theta}}q(\Theta)\log q(\Theta) d\Theta.
\end{equation}
In general, the expectation values of scalar functions defined on the statistical manifold $\mathcal{M}_{\theta}$ are only independent on the coordinate representations:
\begin{equation}
A(I)=A(\Theta)\Rightarrow\left\langle A(I)\right\rangle=\left\langle A(\Theta)\right\rangle.
\end{equation}
However, the probability density function $q(I)$ is simply a \emph{tensorial density}, whose values and general mathematical behavior crucially depend on the concrete coordinate representation $\mathcal{R}_{I}$ of the statistical manifold $\mathcal{M}_{\theta}$. Consequently, the consideration of the quantity $\mathcal{I}_{c}(I)=-\log q(I)$ as a local measure of the \emph{information content} is ill-defined from the geometric viewpoint because of it violates the requirement of covariance under the coordinate reparametrization $\Theta(I):\mathcal{R}_{I}\rightarrow\mathcal{R}_{\Theta}$ of the statistical manifold  $\mathcal{M}_{\theta}$. Despite their apparent similarity, the differential geometry (\ref{differential.entropy}) is not a good generalization of the statistical entropy (\ref{statistical.entropy}) for the framework of continuous distribution functions. For example, the differential entropy (\ref{differential.entropy}) does not obey other properties of its discrete counterpart (\ref{statistical.entropy}), in particular, the positive definition $S\left[p|\theta\right]\geq 0$.

An attempt to overcome some of the above inconsistences was developed by Jaynes \cite{Jaynes}. According to this author, the correct formula for the information entropy of a continuous distribution function can be derived taking the \emph{limit of increasingly dense discrete distributions}. Specifically, Jaynes proposed to start from of a set of $n$ discrete points $\mathcal{S}_{n}=\left\{I_{i}\right\}\subset\mathcal{M}_{\theta}$, which density $\gamma_{n}(I)=n^{-1}\sum^{n}_{i=1}\delta \left(I-I_{i}\right)$ approaches a certain function $\gamma(I)$ in the limit $n\rightarrow\infty$:
\begin{equation}
\gamma(I)=\lim_{n\rightarrow\infty}\gamma_{n}(I).
\end{equation}
The density function $\gamma(I)$ is referred to as the \emph{invariant measure}. Combining the previous argument and the discrete definition of information entropy (\ref{statistical.entropy}), this author arrived at the following correction for the differential entropy:
\begin{equation}\label{Jaynes.entropy}
\mathcal{S}^{\gamma}_{de}\left[Q|\mathcal{M}_{\theta}\right]=-\int_{\mathcal{\mathcal{M}_{\theta}}}q(I)\log\left[\frac{q(I)}{\gamma(I)}\right]dI.
\end{equation}
At first glance, the present formula is similar to but conceptually distinct from the (negative of the) \emph{Kullback-Leibler divergence} \cite{Pardo}:
\begin{equation}\label{KL}
D_{KL}(Q|P)=\int_{\mathcal{\mathcal{M}_{\theta}}}q(I)\log\left[\frac{q(I)}{p(I)}\right]dI.
\end{equation}
As many other divergences considered in statistics \cite{Pardo}, Kullback-Leibler divergence (\ref{KL}) is a measure of the \emph{separation} of a distribution function $dQ(I)=q(I)dI$ to a reference distribution $dP(I)=p(I)dI$. In the formula (\ref{Jaynes.entropy}), however, the invariant measure $\gamma(I)$ need not to be a \emph{probability density}, but simply a density. In particular, it need not satisfied the normalization condition:
\begin{equation}
\int_{\mathcal{M}_{\theta}}\gamma(I)dI\neq 1.
\end{equation}

Although Jaynes' differential entropy (\ref{Jaynes.entropy}) is invariant under coordinate reparametrizations, the success achieved with this correction formula is only partial. In fact, Jaynes was unable to provide a general criterium to precise the invariant measure $\gamma(I)$ for a concrete application. Referring to this ambiguity, he recognized that \emph{... the following arguments can be made as rigorous as we please, but at considerable sacrifice of clarity} \cite{Jaynes}. Remarkably, the pre-existence of a \emph{Riemannian structure} defined on the statistical manifold $\mathcal{M}_{\theta}$ introduces a natural choice for the invariant measure $\gamma(I)$. While the probability density $q(I)$ of a distribution function $dQ(I)$ depends on the coordinate representation $\mathcal{R}_{I}$, the notion of \emph{probability weight}\footnote{The notion of probability weight was considered in equation (\ref{scalar.weight}) to introduce the information potential $\mathcal{S}(I|\theta)$.}:
\begin{equation}\label{proper.density}
q_{g}(I)=q(I)\sqrt{\left|2\pi g^{ij}(I|\theta)\right|}
\end{equation}
represents a scalar function defined on the statistical manifold $\mathcal{M}_{\theta}$. Using the probability weight $q_{g}(I)$ instead of the probability density $q(I)$, the quantity $\mathfrak{I}_{c}(I|\theta)=-\log q_{g}(I)$ can be introduced as \emph{a local invariant measure of the information content}. Thus, a more appropriate generalization of information entropy for a continuous distribution function is given by:
\begin{equation}\label{Cov.Ent}
\mathcal{S}^{g}_{de}\left[Q|\mathcal{M}_{\theta}\right]=\left\langle\mathfrak{I}_{c}(I|\theta)\right\rangle=-\int_{\mathcal{M}_{\theta}}q_{g}(I)\log q_{g}(I)d\mu(I|\theta),
\end{equation}
where the index $g$ denotes the Riemannian structure of the statistical manifold $\mathcal{M}_{\theta}$. Noteworthy that equation (\ref{Cov.Ent}) is a particular case of Jaynes' differential entropy (\ref{Jaynes.entropy}), where the invariant measure $\gamma(I)$ is determined by the metric tensor $g_{ij}(I|\theta)$ of the statistical manifold $\mathcal{M}_{\theta}$:
\begin{equation}\label{Gamma.geo}
\gamma(I)=\sqrt{\left|g_{ij}(I|\theta)/2\pi\right|}\equiv 1/\sqrt{\left|2\pi g^{ij}(I|\theta)\right|}.
\end{equation}
According to Jaynes' argument, the invariant measure (\ref{Gamma.geo}) can be obtained as the limit of increasingly dense subset of points $\mathcal{S}_{n}$ that are \emph{uniformly distributed} on the statistical manifold $\mathcal{M}_{\theta}$, that is, a distribution function whose probability weight $\gamma_{g}(I|\theta)\equiv 1$.

The geometric differential entropy (\ref{Cov.Ent}) depends both on the distribution function $dQ(I)$ as well as the Riemannian structure of the statistical manifold $\mathcal{M}_{\theta}$. According to postulates of fluctuation geometry, the Riemannian structure of the statistical manifold $\mathcal{M}_{\theta}$ is associated with a \emph{reference distribution function} $dp(I|\theta)$, specifically, the distribution function (\ref{EinsteinPostulate}) derived from the knowledge of the information potential $\mathcal{S}(I|\theta)$. Therefore, it is worth to distinguish between two different notions of differential entropy:
\begin{itemize}
  \item The differential entropy $\mathcal{S}^{g}_{de}\left[Q|\mathcal{M}_{\theta}\right]$ of an arbitrary distribution function $dQ(I)$ defined on a statistical manifold $\mathcal{M}_{\theta}$, which is endowed of a \emph{pre-existent} Riemannian structure.
  \item The notion of \emph{intrinsic differential entropy} $\mathcal{S}^{g}_{de}\left[\mathcal{M}_{\theta}\right]$ of a statistical manifold $\mathcal{M}_{\theta}$, that is, the differential entropy $\mathcal{S}^{g}_{de}\left[p|\mathcal{M}_{\theta}\right]$ of the distribution function $dp(I|\theta)$ associated with the Riemannian structure of the statistical manifold $\mathcal{M}_{\theta}$.
\end{itemize}

The intrinsic differential entropy $\mathcal{S}^{g}_{de}\left[\mathcal{M}_{\theta}\right]$ of the statistical manifold $\mathcal{M}_{\theta}$ is given by the negative of the expectation value of the information potential $\mathcal{S}(I|\theta)$:
\begin{equation}
\mathcal{S}^{g}_{de}\left[\mathcal{M}_{\theta}\right]\equiv -\left\langle\mathcal{S}(I|\theta)\right\rangle,
\end{equation}
which can be rewritten using equation(\ref{E1}) as follows:
\begin{equation}
\mathcal{S}^{g}_{de}\left[\mathcal{M}_{\theta}\right]= \frac{1}{2}\left\langle \mathfrak{D}^{2}_{\theta}(I|\bar{I})\right\rangle-\mathcal{P}(\theta).
\end{equation}
Accordingly, $\mathcal{S}^{g}_{de}\left[\mathcal{M}_{\theta}\right]$ is a \emph{global geometric measure} of an statistical manifold $\mathcal{M}_{\theta}$, which depends on its topological properties and Riemannian structure, as well as the position of the point $\bar{I}$ with maximum information potential. In particular, if the statistical manifold $\mathcal{M}_{\theta}$ can be decomposed into two independent Riemannian manifolds $\mathcal{A}$ and $\mathcal{B}$ as $\mathcal{M}_{\theta}=\mathcal{A}\otimes\mathcal{B}$, its intrinsic differential entropy $\mathcal{S}^{g}_{de}\left[\mathcal{M}_{\theta}\right]$ is additive:
\begin{equation}\label{additivity}
\mathcal{S}^{g}_{de}\left[\mathcal{M}_{\theta}\right]=\mathcal{S}^{g_{\mathcal{A}}}_{de}\left[\mathcal{A}\right]
+\mathcal{S}^{g_{\mathcal{B}}}_{de}\left[\mathcal{B}\right],
\end{equation}
where $g_{\mathcal{A}}$ and $g_{\mathcal{B}}$ denote their respective Riemannian structures.

Before we end this section, it is worth remarking that the requirement of covariance is a strong constraint. The existence of this symmetry in the differential entropy (\ref{Cov.Ent}) implies that \emph{diffeomorphic distribution functions exhibit the same value of their intrinsic differential entropies}. As already commented, all distribution functions illustrated in figure \ref{equivalence.eps} are diffeomorphic. In fact, their statistical manifolds $\mathcal{M}_{\theta}$ are diffeomorphic to the one-dimensional Euclidean manifold $\mathbb{E}$. Thus, their respective intrinsic differential entropies exhibit the same value $\mathcal{S}^{g}_{de}\left[\mathbb{E}\right]=1/2$. This result is easy to obtain using the gaussian distribution (\ref{GD.example}) associated with the coordinate representation $\mathcal{R}_{s}$ of the statistical manifold $\mathcal{M}_{\theta}$. Moreover, the $n$-dimensional manifold $\mathcal{M}_{\theta}$ associated with a distribution function obeying the decomposition (\ref{decomposition}) has an intrinsic information entropy $\mathcal{S}^{g}_{de}\left[\mathbb{E}^{n}\right]=n/2$, which is a direct consequence of the property (\ref{additivity})

\subsection{Riemannian reformulation of Einstein fluctuation theory}

Classical fluctuation theory starts from \emph{Einstein postulate} \cite{Reichl}:
\begin{equation}\label{EP}
dp(I|\theta)=\mathcal{A}e^{S(I|\theta)}dI,
\end{equation}
which describes the fluctuating behavior of a set of macroscopic observables $I$ in an equilibrium situation driven by certain control parameters $\theta$\footnote{The Boltzmann's constant $k$ is set as the unity, $k=1$.}. Here, $S(I|\theta)$ denotes the entropy of a closed system, while $\mathcal{A}$ is a normalization constant. Since the function $\rho(I|\theta)=\mathcal{A}e^{S(I|\theta)}$ is a tensorial density, the entropy $S(I|\theta)$ considered in Einstein postulate (\ref{EP}) behaves under a coordinate reparametrization $\Theta(I):\mathcal{R}_{I}\rightarrow\mathcal{R}_{\Theta}$ as follows:
\begin{equation}
S(\Theta|\theta)=S(I|\theta)-\log\left|\frac{\partial\Theta}{\partial I}\right|,
\end{equation}
and hence, it is not a scalar function. This feature is a direct contradiction with the thermodynamic relevance of entropy as a \emph{state function}\footnote{A state function is a property of a system that depends only on its current state, not on the way in which the system acquired that state. Geometrically, the value of a state function should not depend on the coordinate representation employed to describe that state, that is, it should be a \emph{scalar function}.}. Consequently, Einstein postulate (\ref{EP}) is \emph{ill-defined from the geometric viewpoint}.

As already discussed in a recent paper \cite{Vel.GEO}, the previous inconsistence disappears when one redefines Einstein postulate into the covariant form (\ref{EinsteinPostulate}). Thus, the entropy of a closed system should be identified with the information potential $\mathcal{S}(I|\theta)$ of fluctuation geometry up to the precision of an additive constant. As expected, the covariant distribution function (\ref{EinsteinPostulate}) also depends on the Riemannian structure of the statistical manifold $\mathcal{M}_{\theta}$ of the macroscopic observables $I$. Such an ambiguity disappears considering the relationship between the metric tensor $g_{ij}(I|\theta)$ and the negative of the \emph{covariant Hessian} of the scalar entropy $\mathcal{S}(I|\theta)$, equation (\ref{cov.EH}). Consequently, the system fluctuating behavior is fully determined by the knowledge of the entropy $\mathcal{S}(I|\theta)$. From the physical viewpoint, the constraint (\ref{cov.EH}) between the metric tensor $g_{ij}(I|\theta)$ and the entropy $\mathcal{S}(I|\theta)$ is very relevant. In fact, such a choice ensures the \emph{geodesic character} of the \emph{system hydrodynamic equations} \cite{Vel.GEO}:
\begin{equation}\label{hydro}
\frac{dI^{i}(s)}{ds}=-\upsilon^{i}(I|\theta).
\end{equation}
Here, the parameter $s$ denotes the arc-length of the curve of hydrodynamic relaxation $I(s)\in\mathcal{M}_{\theta}$. This curve can be characterized by the tangent vector field $\xi(s)$ with contravariant components:
\begin{equation}
\xi^{i}(s)=\frac{dI^{i}(s)}{ds},
\end{equation}
which is unitary vector field, $g_{ij}(I|\theta)\xi^{i}(s)\xi^{j}(s)\equiv 1$. The tangent vector field $\xi(s)$ is oriented in the same direction of the \emph{generalized restituting force} $\zeta(I|\theta)$ with covariant components $\zeta_{i}(I|\theta)=\partial\mathcal{S}(I|\theta)/\partial I^{i}\equiv-\psi_{i}(I|\theta)$. As expected, the generalized restituting force is directed towards the equilibrium configuration $\bar{I}$ (the point with maximum entropy), whose direction is determined by the negative of the unitary vector field $\upsilon^{i}(I|\theta)$ introduced in equation (\ref{unitary}). As already shown in the proof of \textbf{Theorem 4}, the unitary vector field $\upsilon^{i}(I|\theta)$ obeys geodesic differential equations (\ref{geodesic}). In physics, the geodesics are regarded as the \emph{natural motions} in theories with a Riemannian formulation, as example, in general relativity theory. This nontrivial result establishes an interesting connection of the present Riemannian reformulation of equilibrium fluctuation theory and its possible nonequilibrium generalization.

\subsection{Relation with Ruppeiner's geometry of thermodynamics}

Ruppeiner proposed in the past a Riemannian geometry for thermodynamics \cite{Ruppeiner}. Despite its large history in the literature, Ruppeiner's geometry undergoes some inconsistencies. The most important is that this approach assumes the scalar character of entropy $S(I|\theta)$ employed in Einstein postulate (\ref{EP}). Ruppeiner recognized himself the restricted applicability of this consideration (see in subsection II.B in his paper in Review of Modern Physics \cite{Ruppeiner}). However, he justified its application as a \emph{good approximation} in the framework of large thermodynamic systems, whose fluctuating behavior can be described within the \emph{gaussian approximation}:
\begin{equation}\label{Rup.Gaussian}
dp(I|\bar{I})\simeq \exp\left[-\frac{1}{2}g_{ij}(\bar{I})\Delta I^{i}\Delta I^{j}\right]\sqrt{\left|\frac{g_{ij}(\bar{I})}{2\pi}\right|}dI.
\end{equation}
Here, $\Delta I^{i}=I^{i}-\bar{I}^{i}$ denotes a small deviation from the most likely (equilibrium) state $\bar{I}$, while $g_{ij}(\bar{I})$ represents the \emph{thermodynamic metric tensor} \cite{Ruppeiner}:
\begin{equation}\label{thermo.tensor}
g_{ij}(\bar{I})=-\frac{\partial^{2}S(\bar{I}|\theta)}{\partial I^{i}\partial I^{j}}.
\end{equation}
Notice that the thermodynamic metric tensor is expressed here in terms of the most likely state $\bar{I}$ instead of the control parameters $\theta$. Apparently, the goal of this consideration is to justify the relevance of the distance notion:
\begin{equation}
ds^{2}_{R}=g_{ij}(\bar{I})d\bar{I}^{i}d\bar{I}^{j}
\end{equation}
as a \emph{thermodynamic distance between equilibrium states}. The previous consideration, however, has a restricted applicability even in the framework of large thermodynamic systems. Since the function $\rho(I|\theta)=\mathcal{A}e^{S(I|\theta)/k}$ is a tensorial density, the probability distribution (\ref{EP}) can exhibit two or more maxima $\bar{I}$ for certain values of control parameters $\theta$. Consequently, it is not possible to guarantee the bijective correspondence between the control parameters $\theta$ and the equilibrium states $\bar{I}$ (the most likely values of macroscopic observables). This type of situations is associated with the \emph{phenomenon of ensemble inequivalence} in statistical mechanics, which is observed during the occurrence of \emph{discontinuous phase transitions} \cite{Reichl}.

Fluctuation geometry successfully overcome the previous limitations of Ruppeiner's geometry \cite{Vel.GEO}. As already commented, the key considerations are (i) to assume the covariant redefinition of Einstein postulate (\ref{EinsteinPostulate}) to ensure the scalar character of the entropy $\mathcal{S}(I|\theta)$; and (ii) to generalize the thermodynamic metric tensor (\ref{thermo.tensor}) considering the scalar entropy $\mathcal{S}(I|\theta)$ and the covariant differentiation $D_{i}$:
\begin{equation}
g_{ij}(\bar{I})=-\frac{\partial^{2}S(\bar{I}|\theta)}{\partial I^{i}\partial I^{j}}\rightarrow g_{ij}(I|\theta)=-D_{i}D_{j}\mathcal{S}(I|\theta).
\end{equation}
From this viewpoint, Riemannian reformulation of Einstein fluctuation theory arises as a formal improvement of Ruppeiner's geometry. Rewriting the previous relation as follows:
\begin{equation}\label{dsg}
g_{ij}(I|\theta)=-\frac{\partial^{2}\mathcal{S}(I|\theta)}{\partial I^{i}\partial I^{j}}+\Gamma^{k}_{ij}(I|\theta)\frac{\partial\mathcal{S}(I|\theta)}{\partial I^{k}},
\end{equation}
it is easy to check that the metric tensor $g_{ij}(I|\theta)$ looks like the thermodynamic metric tensor (\ref{thermo.tensor}) at the point $\bar{I}$ with maximum entropy, where $\partial \mathcal{S}(\bar{I}|\theta)/\partial I^{i}=0$. However, the existence and uniqueness of the point $\bar{I}$ is now guaranteed by \textbf{Theorem 3}. Moreover, the metric tensor $g_{ij}(I|\theta)$ of fluctuation geometry is well-defined for any macroscopic state $I\in\mathcal{M}_{\theta}$. While gaussian approximation (\ref{Rup.Gaussian}) is only applicable to large thermodynamic systems with a small fluctuating behavior, Riemannian gaussian representation (\ref{UGR}) is a rigorous result. Noteworthy that the application of fluctuation geometry in classical fluctuation theory cannot be regarded as a Riemannian approach of thermodynamics, but a \emph{Riemannian approach of classical statistical mechanics}\footnote{Thermodynamics is a macroscopic physical theory that disregards the incidence of fluctuations.}.

\section{Final remarks and open problems}\label{Final}
Fluctuation geometry was proposed in this work as a counterpart approach of inference geometry. This new geometry allows the introduction of the distance notion $ds^{2}=g_{ij}(I|\theta)dI^{i}dI^{j}$ to characterize a \emph{statistical distance} between two different values of the stochastic variables $I$, whose behavior is described by a member of a parametric family of continuous distribution functions $dp(I|\theta)=\rho(I|\theta)dI$. The metric tensor $g_{ij}(I|\theta)$ has been derived starting from a set of axioms, which lead to a set of covariant differential equations (\ref{cov.equation}) written in terms of the probability density $\rho(I|\theta)$. The main consequence is the possibility to rephrase the probability description in terms of purely geometric notions, as the case of Riemannian gaussian distribution (\ref{UGR}). Thus, the statistical description can be equivalently performed using the language of Riemannian geometry, and hence, fluctuation geometry represents an alternative framework for applying the powerful tools of differential geometry for the statistical analysis. As already evidenced, the present approach leads to a reconsideration of the notion of information entropy for a continuous distribution as well as the Riemannian reformulation of Einstein fluctuation theory.

Before we end this section, let us comment some open problems that deserve a special attention in future works. Firstly, it is important to clarify the \emph{existence and uniqueness} of the solution of the covariant differential equations (\ref{cov.equation}). As already evidenced, postulates of fluctuation geometry allow a univocal determination of the Riemannian structure of the statistical manifold $\mathcal{M}_{\theta}$ for the application examples discussed in this work, which exhibit a trivial Euclidean (flat) geometry. Thus, it is necessary to check if such an existence and uniqueness are preserved in a statistical manifold $\mathcal{M}_{\theta}$ with a more complex Riemannian structure.

Secondly, one expects that the \emph{curvature notion} of manifold $\mathcal{M}_{\theta}$ should play a fundamental role from the statistical viewpoint. Some basic arguments suggest that curvature should be associated with the notion of \emph{statistical correlations}. Both curvature notion and the statistical correlations, as example, can only be defined when the dimension $n$ of the statistical manifold $\mathcal{M}_{\theta}$ is equal or larger than two. Interestingly, the existence of a decomposition (\ref{decomposition}) in a parametric family implies the flat character of the statistical manifold $\mathcal{M}_{\theta}$. This possible relevance of the curvature notion of the statistical manifold $\mathcal{M}_{\theta}$ is consistent with some physical analogies. General Relativity theory, as example, identifies gravitational interaction with the curvature of the space-time $\mathbb{M}^{4}$. In the framework of statistical theories as quantum mechanics, the statistical correlations can be regarded as the counterpart of interactions. The gas of non-interacting particles obeying Fermi-Dirac statistics, in particular, manifests \emph{effective repulsion forces} as consequence of the \emph{inter-particle correlations} associated with Pauli's exclusion principle. By analogy, a non-vanishing curvature of a statistical manifold as $\mathcal{M}_{\theta}$ would be associated with the existence of \emph{irreducible statistical correlations}. The analysis of this conjecture will be the main interest of a forthcoming paper.

A third question is to analyze how deep is the analogy between inference geometry and fluctuation geometry. Specifically, it is natural to wonder if each result obtained in one of these theories has a counterpart relation in the other theory. Gaussian approximation (\ref{gaussian.fluct}), as example, can be regarded as a counterpart result of the asymptotic distribution (\ref{asympt.inf}) of inference theory. Starting from the fact that gaussian distribution (\ref{gaussian.fluct}) admits the exact improvement (\ref{UGR}), the underlying analogy strongly suggests the following improvement:
\begin{equation}
dQ^{m}(\vartheta|\theta)=\frac{1}{\mathcal{Z}(\theta)}\exp\left[-\frac{1}{2}
\ell^{2}(\vartheta)\right]\sqrt{\frac{g_{\alpha\beta}(\vartheta)}
{2\pi}}d\vartheta
\end{equation}
for the asymptotic distribution (\ref{asympt.inf}) of inference theory. Here, $\ell(\vartheta)$ should represent the distance $\mathfrak{D}(\vartheta,\theta)$ between the points $\vartheta$ and $\theta$ calculated with the metric tensor $g_{\alpha\beta}(\theta)$ defined on the statistical manifold $\mathcal{P}$ of control parameters $\theta$. It would be interesting to analyze this conjecture. Finally, it would be interesting to analyze other implications of fluctuation geometry in the framework of information theory \cite{Thomas}. A question with a special interest is the application of the notion of intrinsic differential entropy to the problem of \emph{maximum information distributions} \cite{Uffink}.

\appendix
\section{Derivation of the reparametrization function $s(I|\theta)$}\label{Deriv.FG1D}

Let us denote by $s_{1}$ and $s_{2}$ the coordinates that correspond to the boundary points $I_{min}$ and $I_{max}$ in the new coordinate representation $\mathcal{R}_{s}$.  The integration of equation (\ref{GD.example}) yields the following relation:
\begin{equation}
p(I|\theta)=\frac{\Phi(s)-\Phi(s_{1})}{\Phi(s_{2})-\Phi(s_{1})}.
\end{equation}
Here, $p(I|\theta)$ represents the cumulant distribution function (\ref{cumulant}) and $\Phi(z)$ the function (\ref{Psi}). Moreover, it was taken into account that the normalization condition of the distribution function $dp(I|\theta)$ implies the relation $\mathcal{Z}(\theta)\equiv\Phi(s_{2})-\Phi(s_{1})$. Introducing the inverse function $\Phi^{-1}(z)$, the reparametrization function $s(I|\theta):\mathcal{R}_{I}\rightarrow \mathcal{R}_{s}$ can be expressed as follows:
\begin{equation}\label{rep.f1}
s(I|\theta)=\Phi^{-1}\left[\phi+\sigma p(I|\theta)\right],
\end{equation}
where $\phi=\Phi(s_{1})$ and $\sigma=\Phi(s_{2})-\Phi(s_{1})$. Considering the expression (\ref{E1}), the information potential $\mathcal{S}(I|\theta)$ can be expressed as follows:
\begin{equation}\label{IP.der}
\mathcal{S}(I|\theta)=-\log \sigma - \frac{1}{2}s^{2}(I|\theta).
\end{equation}
Both the reparametrization function (\ref{rep.f1}) and the information potential (\ref{IP.der}) depend on the nonnegative constants $\phi$ and $\sigma$, whose values are determined by the boundary points $s_{1}$ and $s_{2}$ in the coordinate representation $\mathcal{R}_{s}$. However, a careful analysis reveals that these parameters cannot admit arbitrary values. Firstly, one should notice that the information potential (\ref{IP.der}) is everywhere finite when $\left|s_{1,2}\right|<\infty$. Taking into account the relationship between the information potential $\mathcal{S}(I|\theta)$ and the probability density $\rho(I|\theta)$:
\begin{equation}
\rho(I|\theta)\equiv\exp\left[\mathcal{S}(I|\theta)\right]\sqrt{\left|\frac{g_{11}(I|\theta)}{2\pi}\right|},
\end{equation}
the vanishing of the probability density $\rho(I|\theta)$ at the boundary $\partial \mathcal{M}_{\theta}$ of the statistical manifold $\mathcal{M}_{\theta}$ (\textbf{Axiom 4}) implies the vanishing of the metric tensor $g_{11}(I|\theta)$ on the boundary $\partial \mathcal{M}_{\theta}$. However, the metric tensor $g_{11}(I|\theta)$ should be non-vanishing everywhere, even, on the boundary $\partial \mathcal{M}_{\theta}$ of the statistical manifold $\mathcal{M}_{\theta}$. The existence of the contravariant metric tensor $g^{ij}(I|\theta)$, in particular, demands the non-vanishing of the metric tensor determinant $\left|g_{ij}(I|\theta)\right|$. For an one dimensional manifold $\mathcal{M}_{\theta}$, this last requirement implies $0<\left|g_{11}(I|\theta)\right|<\infty$. The only way to fulfil such a requirement is to impose the constraints $\phi=0$ and $\sigma=1\Leftrightarrow s_{1}=-\infty$ and $s_{2}= + \infty$, which leads to equation (\ref{map}).

\section*{Acknowledgements}
Velazquez thanks the financial support of CONICYT/Programa Bicentenario de Ciencia y Tecnolog\'{\i}a PSD
\textbf{65} (Chilean agency).

\section*{References}

\end{document}